\newtheorem{lem}{Lemma}[section]
\newtheorem{thm}[lem]{Theorem}
\newtheorem{pro}[lem]{Proposition}
\newtheorem{cor}[lem]{Corollary}
\newtheorem{defi}[lem]{Definition}
\numberwithin{equation}{section}
\newcommand{\ZZ}{{\mathbb{Z}}}
\newcommand{\Des}{{\textsf{Des}}}
\newcommand{\des}{{\textsf{des}}}
\newcommand{\inv}{{\textsf{inv}}}
\newcommand{\maj}{{\textsf{maj}}}
\newcommand{\fmaj}{{\textsf{fmaj}}}
\newcommand{\Dmaj}{{\mathsf{Dmaj}}}
\newcommand{\sign}{{\textsf{sign}}}
\newcommand{\col}{{\mathsf{col}}}
\newcommand{\z}{{\mathbf{z}}}
\title[Signed Mahonian on Colored Permutation Groups]{Signed Mahonian on Parabolic Quotients of\\ Colored Permutation Groups}
\author[{S.-P. Eu}]{Sen-Peng Eu}
\address{Department of Mathematics, National Taiwan Normal University, Taipei 11677, and Chinese Air Force Academy, Kaohsiung 82047, Taiwan, ROC}
\email{speu@math.ntnu.edu.tw}
\author[T.-S. Fu]{Tung-Shan Fu}
\address{Department of Applied Mathematics, National Pingtung University, Pingtung 90003, Taiwan, ROC}
\email{tsfu@mail.nptu.edu.tw}
\author[Y.-H. Lo]{Yuan-Hsun Lo}
\address{Department of Applied Mathematics, National Pingtung University, Pingtung 90003, Taiwan, ROC}
\email{yhlo@mail.nptu.edu.tw}
\subjclass[2010]{05A05, 05A15}
\keywords{signed Mahonian, colored permutations, parabolic quotients, insertion lemma}
\begin{document}

\begin{abstract} We study the generating polynomial of the flag major index with each one-dimensional character, called signed Mahonian polynomial, over the colored permutation group, the wreath product of a cyclic group with the symmetric group. Using the insertion lemma of Han and Haglund--Loehr--Remmel and a signed extension established by Eu \emph{et al.}, we derive the signed Mahonian polynomial over the quotients of parabolic subgroups of the colored permutation group, for a variety of systems of coset representatives in terms of subsequence restrictions. This generalizes the related work over parabolic quotients of the symmetric group due to Caselli as well as to Eu \emph{et al.} As a byproduct, we derive a product formula that generalizes Biagioli's result about the signed Mahonian on the even signed permutation groups.
\end{abstract}

\maketitle

\section{Introduction} The enumeration of the major index with sign over the symmetric group $S_n$ studied by Gessel and Simion (see \cite[Corollary 2]{Wachs}) yields a remarkable factorization formula in terms of $q$-factorials, called \emph{signed Mahonian polynomial},
\begin{equation} \label{eqn:Gessel-Simion}
\sum_{\sigma\in S_n} \sign(\sigma)q^{\maj(\sigma)}=[1]_{q}[2]_{-q}\cdots [n]_{(-1)^{n-1}q},
\end{equation}
where $[k]_q=1+q+\cdots+q^{k-1}=(1-q^k)/(1-q)$.
Adin, Gessel, and Roichman \cite{AGR} extended (\ref{eqn:Gessel-Simion}) to the signed permutation group $B_n$, using the \emph{flag major index} ($\fmaj$) defined by Adin and Roichman \cite{AR},
\begin{equation} \label{eqn:Adin-Gessel-Roichman}
\sum_{\beta\in B_n} \sign(\beta)q^{\fmaj(\beta)}=[2]_{-q}[4]_{q}\cdots [2n]_{(-1)^{n}q}.
\end{equation}

Biagioli and Caselli studied the generating polynomial of the flag major index with each one-dimensional character $\chi_{\epsilon,h}$ over the wreath product $G_{r,n}:=\ZZ_r\wr S_n$ of a cyclic group with the symmetric group, and derived the following formula in the context of  projective reflection group  (cf. \cite[Theorem 4.1]{BC_12}), 
\begin{equation} \label{eqn:Biagioli-Caselli}
\sum_{\pi\in G_{r,n}} \chi_{\epsilon,h}(\pi)q^{\fmaj(\pi)} =[r]_{\zeta^h q}[2r]_{\epsilon\zeta^h q}\cdots [nr]_{\epsilon^{n-1}\zeta^h q},
\end{equation}
where $\epsilon\in\{1,-1\}$, $0\le h\le r-1$ and $\zeta$ is a fixed primitive $r$th root of unity.

For $k<n$, Caselli derived the signed Mahonian over the parabolic quotient of symmetric group $S_n$ by the subgroup $S_k$ \cite[Corollary 3.4]{Caselli_12}, which includes (\ref{eqn:Gessel-Simion}) as a special case,
\begin{equation} \label{eqn:parabolic-Sn}
\sum_{\sigma\in S_n(n-k+1:n)} (-1)^{\inv(\sigma)}q^{\maj(\sigma)} = [k+1]_{(-1)^{nk+n+k}q}[k+2]_{(-1)^{k+1}q}[k+3]_{(-1)^{k+2}q}\cdots [n]_{(-1)^{n-1}q},
\end{equation}
where $S_n(n-k+1:n):=\{\sigma\in S_n:\sigma^{-1}(n-k+1)<\sigma^{-1}(n-k+2)<\cdots<\sigma^{-1}(n)\}$ is a system of representatives of the cosets of $S_k$ in $S_n$. Namely, $S_n(n-k+1:n)\subset S_n$ is the subset of permutations containing the word $(n-k+1,n-k+2,\dots, n)$ as a subsequence. Eu \emph{et al.} derived broader results on the permutations with varied subsequence restrictions \cite[Theorem~1.2]{EFHLS_20}, and suggested extending the results further to the signed permutation group $B_n$, in the spirit of \cite[Problem~5.9]{Caselli_12}.

\smallskip
In the realm of parabolic quotients of $G_{r,n}$, Caselli obtained the distribution
of the flag major index on the quotient of $G_{r,n}$ by the subgroup $G_{r,k}$ \cite[Corollary 5.6]{Caselli_12}, 
\begin{equation} \label{eqn:fmaj-wp}
\sum_{\pi\in C_k} q^{\fmaj(\pi^{-1})}=[(k+1)r]_q[(k+2)r]_q\cdots [nr]_q,
\end{equation}
where $C_k$ is a system of representatives of the cosets of $G_{r,k}$ in $G_{r,n}$. In order to unify (\ref{eqn:parabolic-Sn}) and (\ref{eqn:fmaj-wp}),
Caselli suggested studying further a signed analogue of (\ref{eqn:fmaj-wp}) \cite[Problem 5.9]{Caselli_12},
\begin{equation} \label{eqn:Caselli-problem-5.9}
\sum_{\pi\in C_k} \chi_{\epsilon,h}(\pi^{-1})q^{\fmaj(\pi^{-1})},
\end{equation}
which includes (\ref{eqn:Biagioli-Caselli}) as a special case.  The purpose of this paper is to study the signed Mahonian on the parabolic quotients of $G_{r,n}$ by the subgroups $G_{r,k}$ and $S_k$, respectively, for a variety of systems of coset representatives in terms of subsequence restrictions. See Theorems \ref{thm:Main-result-coset},  \ref{thm:U+V} and \ref{thm:Main-II} for the main results.

\section{Preliminaries and Main Results}
A permutation $\sigma\in S_n$ will be denoted by $\sigma=(\sigma_1,\dots,\sigma_n)$, where $\sigma_i=\sigma(i)$ for all $i\in [n]$. The \emph{inversion number} of $\sigma$ is defined by $\inv(\sigma):=\#\{(i,j):\sigma_i>\sigma_j\mbox{ and } 1\le i<j\le n\}$. 
The set of \emph{descents} of $\sigma$ is $\Des(\sigma):=\{i:\sigma_i>\sigma_{i+1}\mbox{ and } 1\le i\le n-1\}$. 
The \emph{descent number} ($\des$) and \emph{major index} ($\maj$) of $\sigma$ are defined by $\des(\sigma)=|\Des(\sigma)|$ and $\maj(\sigma) = \sum_{i\in\Des(\sigma)} i$, respectively.
We will write $[m,n]:=\{m,m+1,\dots,n\}$ for integers $m< n$ and set $[n]:=[1,n]$.

\subsection{The colored permutation groups.}
The group $G_{r,n}$ of wreath product $\ZZ_r\wr S_n$ consists of all permutations $\pi$ of $[n]\times [0,r-1]$ such that $\pi(a,0)=(b,j)\Rightarrow \pi(a,i)=(b,i+j)$, where $i+j$ is computed modulo $r$, with composition of permutations as the group operation of $\ZZ_r\wr S_n$. The group $G_{r,n}$ reduces to the symmetric group $S_n$ when $r=1$ and the signed permutation group $B_n$ when $r=2$.
Members of $G_{r,n}$ are represented as \emph{colored permutations} on $[n]$ by couples $(\sigma,\z)$, where $\sigma=(\sigma_1,\dots,\sigma_n)\in S_n$, $\z=(z_1,\dots,z_n)\in [0,r-1]^n$ and $z_k$ is the color assigned to $\sigma_k$. We also write $\pi=(\sigma,\z)$ in \emph{window notation} with entries in \emph{colored integers},
\[
\pi=(\pi_1,\pi_2,\dots,\pi_n)=(\sigma_1^{z_1},\sigma_2^{z_2},\cdots,\sigma_n^{z_n}),
\]
where $\pi_i=\pi(i,0)$ for all $i\in [n]$. If $z_i=0$, it is usually omitted. Sometimes $\pi=(\pi_1,\dots,\pi_n)$ is called a \emph{(colored) word}.
Let $|\pi|:=(|\pi_1|,\dots,|\pi_n|)$, where $|\pi_i|=\sigma_i$ is the \emph{absolute value} of $\pi_i$. The \emph{color weight} ($\col$) of $\pi$ is defined by 
\begin{equation} \label{eqn:color-sum}
\col(\pi):=z_1+z_2+\cdots+z_n. 
\end{equation}


There are $2r$ one-dimensional characters of the group $G_{r,n}$, which are of the form
\begin{equation} \label{eqn:one-dimension}
\chi_{\epsilon,h}(\pi) := \epsilon^{\inv(|\pi|)}\zeta^{h\cdot \col(\pi)},
\end{equation}
where $\epsilon\in\{1,-1\}$, $h\in [0,r-1]$, and $\zeta$ is a fixed primitive $r$th root of unity. 

 
\begin{defi} {\rm We use the following order for the flag major index
\begin{equation} \label{eqn:F-linear-order}
F: 1^{r-1}<\cdots<n^{r-1}<\cdots<1^{1}<\cdots<n^{1}<1<\cdots<n.
\end{equation}
}
\end{defi}

The major index ($\maj_F$) and flag major index ($\fmaj$) of $\pi$ are defined by 
\begin{equation} \label{eqn:fmaj-colored}
\maj_F(\pi):=\sum_{\pi_i>\pi_{i+1}} i \qquad\mbox{ and }\qquad \fmaj(\pi) :=r\cdot\maj_F(\pi)+\col(\pi).
\end{equation}
For example, if $r=3$ and $\pi=(4^{2},2^{1},5^{1},1,3^{1})\in G_{3,5}$ then $\col(\pi)=5$, $\maj_F(\pi)=4$, and $\fmaj(\pi)=17$.

\subsection{Main results.}
For two integers $a, b\in [n]$, $a<b$, let $G_{r,n}(a:b)\subset G_{r,n}$
denote the subset of colored words containing $(a,a+1,\dots, b)$ as a subsequence.  We observe that the subgroup of $G_{r,n}$ given by 
\[
\{\pi\in G_{r,n}: \pi=(1^0,\dots,(n-k)^0,\pi_{n-k+1},\dots,\pi_n)\}
\]
is isomorphic to $G_{r,k}$ for all $k<n$, and that 
\[
C_k:=\{(\pi_1,\dots,\pi_{n-k},\sigma_{n-k+1}^0,\dots,\sigma_n^0)\in G_{r,n}:\sigma_{n-k+1}<\cdots<\sigma_n\}
\]
is a system of coset representatives of the subgroup in $G_{r,n}$. Moreover, $G_{r,n}(n-k+1:n)=\{\pi^{-1}:\pi\in C_k\}$.  We obtain the following unified result. 

\begin{thm} \label{thm:Main-result-coset} Let $\chi_{\epsilon,h}$ be a one-dimensional character of $G_{r,n}$. For $1\le k\le n-1$, $k\le b\le n$ and $b\equiv n\pmod 2$, the following results hold.
\begin{enumerate}
\item If $n-k\equiv 0\pmod 2$ then
\[
\sum_{\pi\in G_{r,n}(b-k+1\,:\,b)}\chi_{\epsilon,h}(\pi)q^{\fmaj(\pi)} = [(k+1)r]_{\epsilon^{k}\zeta^h q}\cdots [nr]_{\epsilon^{n-1}\zeta^h q}.
\]
\item If $n-k\equiv 1\pmod 2$ then
\[
\sum_{\pi\in G_{r,n}(b-k+1\,:\,b)}\chi_{\epsilon,h}(\pi)q^{\fmaj(\pi)} =\Big([k+1]_{\epsilon q^r}[r]_{\zeta^h q}\Big)[(k+2)r]_{\epsilon^{k+1}\zeta^h q}\cdots [nr]_{\epsilon^{n-1}\zeta^h q}.
\]
\end{enumerate}
\end{thm}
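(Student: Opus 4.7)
The plan is to argue by induction on $n$, using the signed insertion lemma of Eu \emph{et al.}---a colored refinement of the insertion lemmas of Han and Haglund--Loehr--Remmel---as the principal engine. Since the right-hand side of both cases of the theorem depends only on $n$ and $k$, my first move is to establish that the generating function $\sum_{\pi \in G_{r,n}(b-k+1:b)}\chi_{\epsilon,h}(\pi)q^{\fmaj(\pi)}$ is independent of the particular admissible $b$ in its parity class. I would do so by a $\fmaj$- and $\chi_{\epsilon,h}$-preserving bijection between $G_{r,n}(b-k+1:b)$ and $G_{r,n}(b'-k+1:b')$ for admissible $b\equiv b'\pmod 2$, implemented as an involution that interchanges the ``above'' and ``below'' free letters by an order-reversing rule paired with a suitable color transformation.

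Given this reduction, it suffices to evaluate the sum at the canonical value $b=n$, where the frozen block occupies the top-$k$ values and $G_{r,n}(n-k+1:n)=\{\pi^{-1}:\pi\in C_k\}$ in the notation of the introduction. At this canonical choice I would induct on $n$ in steps of two so as to preserve the parity of $n-k$. At each step I invoke the signed insertion lemma twice, inserting successively a colored letter of absolute value $n-1$ and one of absolute value $n$; together the two insertions contribute to the generating function the factors $[(n-1)r]_{\epsilon^{n-2}\zeta^h q}$ and $[nr]_{\epsilon^{n-1}\zeta^h q}$. The two base cases $n-k=1$ and $n-k=2$ are handled by a direct calculation: in the former, a single free letter is placed in one of $n$ positions with any of $r$ colors and summation yields the split form $[k+1]_{\epsilon q^r}[r]_{\zeta^h q}$ of case (ii); in the latter, two free letters give the product $[(k+1)r]_{\epsilon^k\zeta^h q}[(k+2)r]_{\epsilon^{k+1}\zeta^h q}$ required by case (i).

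The main obstacles will be twofold. First, since the signed insertion lemma is originally established over the whole group $G_{r,n-1}$ rather than over a parabolic quotient, one must verify that the aggregate cancellation responsible for producing the factor $[Nr]_{\epsilon^{N-1}\zeta^h q}$ still occurs when the insertion is restricted to words containing the frozen block; this amounts to isolating the (linear-in-insertion-position) contributions of the block entries to $\maj_F$ and $\inv(|\pi|)$ and checking that they combine with the signed character $\epsilon^{\inv(|\pi|)}\zeta^{h\col(\pi)}$ to leave the predicted $q$-integer intact. Second, the $b$-independence bijection must respect both $\fmaj$ and the sign $\epsilon^{\inv(|\pi|)}$, and the interaction between the order-reversing rule and the $F$-order on colored integers demands careful case analysis near the boundary of the frozen block. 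The parity dichotomy in the theorem statement reflects precisely this boundary effect: when $n-k$ is odd, the last remaining free letter contributes its position and color statistics separately, producing the split form $[k+1]_{\epsilon q^r}[r]_{\zeta^h q}$; when $n-k$ is even, the two remaining free letters combine to yield the amalgamated $q$-integer $[(k+1)r]_{\epsilon^k\zeta^h q}$.
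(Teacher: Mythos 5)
Your overall strategy (reduce to a canonical $b$, then build the word up from the frozen block by inserting the free letters via insertion lemmas, with the parity dichotomy coming from one leftover single letter) is the right one and matches the paper in spirit, but two of your steps have genuine gaps. First, the $b$-independence: you propose a direct $\fmaj$- and $\chi_{\epsilon,h}$-preserving involution between $G_{r,n}(b-k+1:b)$ and $G_{r,n}(b'-k+1:b')$. No such bijection is exhibited in the paper (the authors in fact pose finding bijective proofs of these equivalences as an open problem), and an ``order-reversing rule'' on the free letters will generically not preserve $\fmaj$ and $\epsilon^{\inv(|\pi|)}$ simultaneously. What the paper actually proves (Corollary \ref{cor:W-2}) is only the shift-by-two invariance relating $G_{r,n}(W-2)$ to $G_{r,n}(W)$, and it does so not by a bijection on the full sets but by partitioning each set according to the word obtained after deleting the two extremal free letters, applying the pair-insertion lemma to each block of the partition, and matching the resulting prefactors via the trivial shift $U\mapsto U-2$ on the smaller words.

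Second, and more seriously, your inductive step for $\epsilon=-1$ relies on ``invoking the signed insertion lemma twice, inserting successively'' two single colored letters, each contributing a clean factor of the form $[Nr]_{\epsilon^{N-1}\zeta^h q}$. A single signed insertion does not produce such a factor for a general word $W$: with the sign $\epsilon^{\inv(|\pi|)}$ present, the signs of the insertion positions do not align with the $q$-powers, and the sum depends on the relative order of the inserted letter within $W$. (It happens to work in Lemma \ref{lem:(n-k)-insertion} only because there $W$ is the increasing block and the inserted value is smaller than every entry of $W$.) The mechanism at the heart of the paper's proof is to insert the two consecutive values as an \emph{adjacent pair with equal colors}: a sign-reversing involution, swapping the roles of $m$ and $m+1$, first cancels every configuration in which they are not adjacent with the same color, and only then does the pair-insertion lemma of Eu \emph{et al.}\ yield the product $[(n-1)r]_{(-1)^{n}\zeta^h q}[nr]_{(-1)^{n-1}\zeta^h q}$. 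Relatedly, with $b=n$ the values $n-1$ and $n$ lie in the frozen block, so the letters actually inserted are the small free values (after shifting, $1$ and $2$); your description places the insertion at the wrong end. Your base cases and the final formulas are correct, but without the cancellation-then-pair-insertion mechanism the inductive step does not go through.
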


Moreover, we also obtain the signed Mahonian on the remaining cases $G_{r,n}(b-k:b-1)$ with the one-dimensional characters $\chi_{-1,h}$. Neat formulae are unlikely available; however, we provide a formula for computing the signed Mahonian of $G_{r,n}(b-k:b-1)$ from that of $G_{r,n}(b-k+1:b)$.

\begin{thm} \label{thm:U+V} For $1\le k\le n-1$, $k+1\le b\le n$ and $b\equiv n\pmod 2$, let $U$ and $V$ be the words given by
\[
U=(b-k,b-k+1,\dots,b-1) \quad\mbox{ and}\quad V=(b-k+1,b-k+2,\dots,b).
\]
The following results hold.
\begin{enumerate}
\item If $k$ is odd then
\begin{align*}
&\sum_{\pi\in G_{r,n}(U)}\chi_{-1,h}(\pi)q^{\fmaj(\pi)} 
+\sum_{\pi\in G_{r,n}(V)}\chi_{-1,h}(\pi)q^{\fmaj(\pi)} \\
&\qquad=\left\{ \begin{array}{ll}
2[k+1]_{-q^r}[(k+2)r]_{\zeta^h q}[(k+3)r]_{(-1)^{k+2}\zeta^h q}\cdots [nr]_{(-1)^{n-1}\zeta^h q}
&\mbox{for $n$ even;} \\
& \\
2\Big([k+1]_{-q^r}[k+2]_{q^r}[r]_{\zeta^h q} \Big)[(k+3)r]_{(-1)^{k+2}\zeta^h q}\cdots [nr]_{(-1)^{n-1}\zeta^h q}
&\mbox{for $n$ odd.} 
\end{array}
\right.
\end{align*}
\item If $k$ is even then
\begin{align*}
&\sum_{\pi\in G_{r,n}(U)\setminus G_{r,n}(V)} \chi_{-1,h}(\pi)q^{\fmaj(\pi)}+\sum_{\pi\in G_{r,n}(V)\setminus G_{r,n}(U)} \chi_{-1,h}(\pi)q^{\fmaj(\pi)} \\
&\qquad=\left\{ \begin{array}{ll}
2[k+1]_{-q^r}\Big([r]_{\zeta^h q}-1\Big)[(k+2)r]_{(-1)^{k+1}\zeta^h q}\cdots [nr]_{(-1)^{n-1}\zeta^h q}
&\mbox{for $n$ odd;} \\
& \\
2 f(r;q)\cdot [k+1]_{q^r}[k+2]_{-q^r} [(k+3)r]_{(-1)^{k+2}\zeta^h q}\cdots [nr]_{(-1)^{n-1}\zeta^h q}
&\mbox{for $n$ even,} 
\end{array}
\right.
\end{align*}
where 
$f(r;q)=
-\big(\zeta^h q\big)[r-1]_{(-1)^r \zeta^h q}[r]_{(-1)^{r-1}\zeta^h q}$.
\end{enumerate}
\end{thm}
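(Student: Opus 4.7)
The plan is to leverage Theorem~\ref{thm:Main-result-coset} for the sum over $G_{r,n}(V)$ and to pair it with an insertion-based companion computation for the sum over $G_{r,n}(U)$. Write $P_U$, $P_V$, and $P_{U\cap V}$ for the signed Mahonians $\sum_{\pi}\chi_{-1,h}(\pi)q^{\fmaj(\pi)}$ over $G_{r,n}(U)$, $G_{r,n}(V)$, and $G_{r,n}(U)\cap G_{r,n}(V)=G_{r,n}(b-k:b)$ respectively. Since $V=(b-k+1,\ldots,b)$ ends at $b$ with $b\equiv n\pmod 2$, Theorem~\ref{thm:Main-result-coset} applies directly and yields $P_V$ as a product, splitting into two subcases according to the parity of $n-k$. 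For $P_U$, the subsequence ends at $b-1$ with opposite parity to $n$, so Theorem~\ref{thm:Main-result-coset} is not applicable; instead I would compute $P_U$ recursively by the signed insertion lemma of Eu \emph{et al.}, inserting the largest value (which lies outside $U$ when $n>b-1$) with arbitrary color and position at each step. The outcome is an expression for $P_U$ that matches the broad shape of $P_V$ but differs in the lowest-index $q$-integer factors.

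For case~(i) with $k$ odd, adding $P_U+P_V$ induces cancellation in the low-index factors: after decomposing $[(k+1)r]_x=[k+1]_{x^r}[r]_x$ and using $\zeta^r=1$, the ``correction'' terms in $P_U$ combine with the matching factors in $P_V$ to produce the constant $2$ times the claimed product. For case~(ii) with $k$ even, I would use the symmetric-difference identity
\[
\sum_{\pi\in G_{r,n}(U)\setminus G_{r,n}(V)}\chi_{-1,h}(\pi)q^{\fmaj(\pi)}+\sum_{\pi\in G_{r,n}(V)\setminus G_{r,n}(U)}\chi_{-1,h}(\pi)q^{\fmaj(\pi)}=P_U+P_V-2\,P_{U\cap V},
\]
where $P_{U\cap V}$ is furnished by Theorem~\ref{thm:Main-result-coset} applied with subsequence length $k+1$ (since $b\equiv n\pmod 2$ still holds for the subsequence $(b-k,\ldots,b)$). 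Substituting the three expressions and simplifying yields the stated product, with the polynomial $f(r;q)$ arising from the net combination of the three $q$-integer products.

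The main obstacle is the insertion analysis for $P_U$: because the constrained ``top'' element $b-1$ of $U$ is not the overall largest value in the word, the recursion must separately handle insertions of unconstrained values above $b-1$ and insertions of the constrained elements of $U$, tracking the interaction with $\chi_{-1,h}$ through both the inversion and the color-sum exponents. Verifying algebraically that the resulting correction terms assemble into the clean factors of case~(i) and the polynomial $f(r;q)$ of case~(ii) after the cancellations is the central computational step; a useful consistency check is that $f(1;q)=0$ correctly predicts the vanishing of the case~(ii), $n$-even sum for $r=1$ on the symmetric group, which can be verified by direct computation on small examples.
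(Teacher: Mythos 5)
Your overall bookkeeping is fine (the symmetric-difference identity $P_U+P_V-2P_{U\cap V}$ is correct, and $P_V$ and $P_{U\cap V}$ are indeed supplied by Theorem~\ref{thm:Main-result-coset}), but the proposal has a genuine gap at exactly the point you flag as the ``main obstacle'': you give no workable method for computing $P_U=\sum_{\pi\in G_{r,n}(U)}\chi_{-1,h}(\pi)q^{\fmaj(\pi)}$. The signed insertion machinery available in this setting (Theorem~\ref{thm:Main-III}, following \cite{EFHLS_20}) factors only when one inserts a \emph{pair} of consecutive values adjacently; a single signed insertion $\sum_j(-1)^{\,\inv\text{-change}}q^{\,\maj\text{-change}}$ does not factor for a general (non-increasing) intermediate word, which is why the paper only ever performs single signed insertions into increasing words (Lemma~\ref{lem:(n-k)-insertion}). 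Since $U$ ends at $b-1\not\equiv n\pmod 2$, building $G_{r,n}(U)$ from $U$ forces at least one such unstructured single insertion, and the recursion you describe does not close. More fundamentally, the paper states that a neat formula for $P_U$ alone is ``unlikely available,'' so a strategy premised on first obtaining a closed product form for $P_U$ and then cancelling against $P_V$ cannot be carried out as described.

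The paper's actual route avoids computing $P_U$ entirely: Proposition~\ref{pro:W-1} constructs an explicit bijection from $G_{r,n}(U)\setminus G_{r,n}(V)$ onto $G_{r,n}(V)\setminus G_{r,n}(U)$ preserving the descent set and the color weight while shifting $\inv(|\cdot|)$ by a controlled parity depending on whether the entry $b$ carries a nonzero color. This matches the two sums termwise, so that for $k$ odd the nonzero-color contributions cancel and the color-$0$ contributions double (giving $P_U+P_V=2\sum_{W\in T(V;n-k)}(\cdots)$), while for $k$ even the roles reverse and the symmetric difference reduces to $2\sum_{t\ge 1}\sum_{W\in T(V;(n-k)^t)}(\cdots)$. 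These residual sums live over $G_{r,n}(V)$ with a prescribed color on the entry $b-k$ and are then evaluated by the pair-insertion results, which is where the factors $[k+1]_{-q^r}$, $[k+1]_{q^r}[k+2]_{-q^r}$ and the polynomial $f(r;q)$ actually come from. Some such sign-reversing matching between the two sets is the missing idea in your proposal; without it the ``cancellation in the low-index factors'' you invoke has no mechanism behind it.
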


\medskip
\begin{defi} \label{def:|w|} {\rm
For any $w\in S_n$, let $G_{r,n}(w):=\{\pi\in G_{r,n} : |\pi|=w  \}$. For $a,b\in [n]$, $a<b$, we define
\[
H_{r,n}(a:b) := \bigcup_{w\in S_n(a:b)} G_{r,n}(w),
\]  
namely, $H_{r,n}(a:b)$ consists of colored permutations $\pi$ such that $|\pi|$ contains $(a,a+1,\dots,b)$ as a subsequence.
}
\end{defi}

\smallskip
Regarding the signed Mahonian on the parabolic quotient of $G_{r,n}$ by the subgroup $S_k$, we obtain the following result.

\begin{thm}  \label{thm:Main-II} For $1\le k\le n-1$ and $k\le b\le n$,  the following results hold.
\begin{enumerate}
\item If $k$ is odd then 
\[
\sum_{\pi\in H_{r,n}(b-k+1\,:\,b)}\chi_{-1,h}(\pi)q^{\fmaj(\pi)} =\frac{[r]_{\zeta^h q}[2r]_{-\zeta^h q}\cdots[nr]_{(-1)^{n-1}\zeta^h q}}{[1]_{\zeta^h q}[2]_{-\zeta^h q}\cdots[k]_{(-1)^{k-1}\zeta^h q}}.
\]
\item If $k$ is even and $n-b\equiv 0\pmod 2$ then 
\[
\sum_{\pi\in H_{r,n}(b-k+1\,:\,b)}\chi_{-1,h}(\pi)q^{\fmaj(\pi)} 
=\frac{[r]_{\zeta^h q}[2r]_{-\zeta^h q}\cdots [nr]_{(-1)^{n-1}\zeta^h q}}{[1]_{\zeta^h q}[2]_{-\zeta^h q}\cdots[k+1]_{(-1)^{k}\zeta^h q}}\cdot [k+1]_{(-1)^{n}\zeta^h q}.
\]
\item If $k$ is even and $n-b\equiv 1\pmod 2$  then 
\[
\sum_{\pi\in H_{r,n}(b-k+1\,:\,b)}\chi_{-1,h}(\pi)q^{\fmaj(\pi)} \\
=\frac{[r]_{\zeta^h q}[2r]_{-\zeta^h q}\cdots [nr]_{(-1)^{n-1}\zeta^h q}}{[1]_{\zeta^h q}[2]_{-\zeta^h q}\cdots[k+1]_{(-1)^{k}\zeta^h q}}\, \Big(2-[k+1]_{(-1)^{n}\zeta^h q}\Big).
\]
\end{enumerate}
\end{thm}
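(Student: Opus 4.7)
The plan is to exploit the coset decomposition of $G_{r,n}$ by the subgroup $S_k \subset G_{r,n}$ that permutes the values $\{b-k+1,\ldots,b\}$ while preserving positions and colors. Since $H_{r,n}(b-k+1:b)$ is a system of coset representatives, every $\pi \in G_{r,n}$ factors uniquely as $\pi = \tau \cdot \pi'$ with $\tau \in S_k$ and $\pi' \in H_{r,n}(b-k+1:b)$. One verifies $\col(\pi) = \col(\pi')$ and $\inv(|\pi|) = \inv(|\pi'|) + \inv(\tau)$, so $\chi_{-1,h}(\pi) = (-1)^{\inv \tau}\chi_{-1,h}(\pi')$. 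Substituting into the Biagioli--Caselli formula~(\ref{eqn:Biagioli-Caselli}) with $\epsilon = -1$ yields the master equation
\[
[r]_{\zeta^h q}[2r]_{-\zeta^h q}\cdots[nr]_{(-1)^{n-1}\zeta^h q} = \sum_{\pi' \in H_{r,n}(b-k+1\,:\,b)} \chi_{-1,h}(\pi')\,q^{\col(\pi')}\,S(\pi'),
\]
where $S(\pi') := \sum_{\tau \in S_k}(-1)^{\inv \tau}q^{r\,\maj_F(\tau \cdot \pi')}$.

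The crux is to evaluate $S(\pi')$. Let $p_1 < \cdots < p_k$ denote the positions of the subsequence values in $\pi'$ with respective colors $c_1,\ldots,c_k$, and write $\tau = (\tau_1,\ldots,\tau_k)$ as a word on $\{b-k+1,\ldots,b\}$. A descent analysis in the $F$-order shows that the descent at position $p_j$ of $\tau \cdot \pi'$ differs from that of $\pi'$ only when $p_{j+1} = p_j + 1$ \emph{and} $c_j = c_{j+1}$; setting $J(\pi') := \{j \in [k-1] : p_{j+1} = p_j + 1 \text{ and } c_j = c_{j+1}\}$, one obtains
\[
\maj_F(\tau \cdot \pi') = \maj_F(\pi') + \sum_{j \in J(\pi')} p_j\,[j \in \Des(\tau)].
\]
Thus $S(\pi')$ is a flagged signed Gessel--Simion sum in the variables $(q^{r p_j})_{j \in J(\pi')}$, and a sign-reversing involution on $\tau$ (swapping adjacent entries at indices isolated from $J(\pi')$) cancels the contribution of any $\pi'$ whose $J(\pi')$ is sparse enough. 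After this cancellation, only $\pi'$ whose subsequence values occupy dense blocks of consecutive positions with matching colors survive.

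For case~(i) ($k$ odd), the surviving block contribution is clean: $S(\pi')$ reduces to the Gessel--Simion polynomial $[1]_{\zeta^h q}[2]_{-\zeta^h q}\cdots[k]_{(-1)^{k-1}\zeta^h q}$ evaluated at the appropriate specialization, which factors out of the outer sum and, after dividing the master equation by it, yields the quotient formula directly. For cases~(ii) and~(iii) ($k$ even), the Gessel--Simion polynomial over $S_{k+1}$ fails to produce a clean split, so one instead stratifies the residual sum by the location of the consecutive block of subsequence values: either the block can be flush against position $n$ or it cannot, a dichotomy controlled by the parity of $n-b$. Invoking Theorems~\ref{thm:Main-result-coset} and~\ref{thm:U+V} to evaluate the signed Mahonian over the colored-subsequence sets $G_{r,n}(b-k+1:b)$ and $G_{r,n}(b-k:b-1)$ handles each boundary contribution, producing the correction factor $[k+1]_{(-1)^{n}\zeta^h q}$ in case~(ii) and $2 - [k+1]_{(-1)^{n}\zeta^h q}$ in case~(iii).

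The hardest part will be the non-product factor $2 - [k+1]_{(-1)^{n}\zeta^h q}$ in case~(iii): it emerges from a difference of two boundary contributions (the "block flush right" and "block not flush right" sums) that interfere only partially, and isolating the interference requires delicate sign bookkeeping using Theorem~\ref{thm:U+V}. The parity-dependent split between (ii) and (iii) reflects precisely this boundary behavior, and verifying that the two boundary sums recombine into the stated closed form is the most intricate step; once the sign-reversing involution is justified for $k$ even and the boundary sums are evaluated via Theorem~\ref{thm:U+V}, the remaining algebra is routine.
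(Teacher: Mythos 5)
There is a genuine gap here, and it is structural rather than a matter of missing details. Your master equation expresses the \emph{full-group} polynomial as $\sum_{\pi'}\chi_{-1,h}(\pi')q^{\col(\pi')}S(\pi')$, whereas the quantity to be computed is $\sum_{\pi'}\chi_{-1,h}(\pi')q^{\col(\pi')+r\,\maj_F(\pi')}$, i.e.\ only the $\tau=\mathrm{id}$ term of each $S(\pi')$. To "divide the master equation by the Gessel--Simion polynomial $P$ and get the quotient formula" you would need $S(\pi')=P\cdot q^{r\,\maj_F(\pi')}$ uniformly in $\pi'$, and your own descent analysis shows this is false: $S(\pi')=q^{r\,\maj_F(\pi')}\sum_{\tau\in S_k}(-1)^{\inv\tau}\prod_{j\in\Des(\tau)\cap J(\pi')}q^{rp_j}$, which equals $0$ whenever $J(\pi')=\emptyset$ and $k\ge 2$, and more generally depends on the positions $p_j$. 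So the master equation does not determine the target sum; the two agree only as a consequence of the theorem, not as a route to it. A second, independent problem is the cancellation claim that "only dense blocks survive": already for $k=3$ one computes $\sum_{\tau\in S_3}(-1)^{\inv\tau}\prod_{j\in\Des(\tau)}x_j=1-x_1x_2$, which does not vanish when exactly one of $x_1,x_2$ equals $1$, so non-block $\pi'$ contribute nontrivially. Finally, the boundary analysis via Theorems \ref{thm:Main-result-coset} and \ref{thm:U+V} is not applicable as stated, since those theorems concern the sets $G_{r,n}(b-k+1:b)$ where the distinguished subsequence carries color $0$, not the color-free sets $H_{r,n}$.

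For comparison, the paper's proof avoids the coset decomposition by $S_k$ altogether. It fibers $H_{r,n}(b-k+1:b)$ over $S_n(b-k+1:b)$ via $\pi\mapsto|\pi|$, uses Theorem \ref{thm:Main-I} to evaluate each fiber as $(-1)^{\inv(w)}(\zeta^hq)^{\maj(w)}\prod_{i=1}^n[r]_{(\zeta^hq)^i}$, and then invokes the known $r=1$ signed Mahonian over $S_n(b-k+1:b)$ (Theorem \ref{thm:EFHLS-result}) at $q\mapsto\zeta^hq$; the identity $[jr]_{(-1)^{j-1}\zeta^hq}=[r]_{(\zeta^hq)^j}[j]_{(-1)^{j-1}\zeta^hq}$ then yields the stated products. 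If you want to salvage your approach, you would need to evaluate the $\tau=\mathrm{id}$ contribution directly rather than the full coset sum, which effectively brings you back to a fiberwise argument of the paper's type.
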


The rest of the paper is organized as follows. Section 3 and Section 4 are devoted to the proofs of Theorems \ref{thm:Main-result-coset} and \ref{thm:U+V}, respectively. In Section 5 we derive a factorization formula for the signed Mahonian on the set $G_{r,n}(w)$ (Theorem \ref{thm:Main-I}), which allows us to prove Theorem \ref{thm:Main-II}. 
In Section 6 we obtain a product formula that generalizes Biagioli's result about the signed Mahonian on the even signed permutation groups (Theorem \ref{thm:Main-I-Dmaj}).
Finally, we close with two remarks.

\section{Proof of Theorem \ref{thm:Main-result-coset}}
In this section we shall study the signed Mahonian over the quotient of the parabolic subgroup $G_{r,k}$ in $G_{r,n}$, and prove Theorem \ref{thm:Main-result-coset}. 

\smallskip
For any finite set $A=\{a_1,\dots,a_n\}$ of positive integers, let $S_A$ denote the set of permutations of $\{a_1,\dots,a_n\}$ and let $G_{r,A}:=\ZZ_r\wr S_A$, the group of colored permutations of $A$. For any word $W\in G_{r,A}$,  the inversion number of $|W|$ is calculated using the natural order $a_1<\cdots<a_n$ of integers, and the flag major index of $W$ is computed with respect to the following order of colored integers
\[
a_1^{r-1}<\cdots<a_n^{r-1}<\cdots<a_1^{1}<\cdots<a_n^{1}<a_1<\cdots<a_n.
\]

\smallskip
\subsection{Insertion Lemma.} 
The insertion lemma of Han \cite{Han-thesis} and
Haglund--Loehr--Remmel \cite{HLR} describes the increment of major index resulting from the insertion of an additional element into a given permutation, which works just as well for colored integers in the order (\ref{eqn:F-linear-order}). 

\begin{defi} {\rm Given a set $A\subset [n]$ and a colored word $W\in G_{r,A}$, let $G_{r,n}(W)$ denote the subset of $G_{r,n}$ consisting of the members containing the word $W$ as a subsequence. By abuse of notation, we will use $\chi_{\epsilon,h}$ to denote the following statistic of $W$
\[
\chi_{\epsilon,h}(W):=\epsilon^{\inv(|W|)}\zeta^{h\cdot \col(W)}.
\]
Moreover, for any $m\in [n]\setminus A$, let $T(W; m^t)$ denote the set of words obtained from $W$ by inserting $m^t$ for some $t\in [0,r-1]$.
}
\end{defi}

\smallskip
\noindent
{\bf Remarks:} To distinguish $G_{r,n}(W)$ from the notation $G_{r,n}(w)$ in Definition \ref{def:|w|}, we shall use an upper case letter, say $W$, for a colored word restriction.

\medskip
For any $m\in [n]$, let $W$ be a word of $G_{r,A}$, where $A=\{1,\dots,m-1,m+1,\dots,n\}$. Every member of $G_{r,n}(W)$ can be obtained from $W$ by inserting the colored integer $m^t$ for some $t\in [0,r-1]$.
Haglund, Loehr and Remmel \cite[Corollary 4.2]{HLR} proved that no matter what is the relative order of $m^t$ with respect to the entries of $W$
\begin{equation} \label{eqn:HLR-result}
\sum_{\pi\in T(W; m^t)} q^{\maj_F(\pi)}=q^{\maj_F(W)}[n]_q.
\end{equation}
This leads to the following results for $\epsilon=1$.

\begin{thm} \label{thm:epsilon=1} The following results hold.
\begin{enumerate}
\item For any $m\in [n]$ and any word $W\in G_{r,\{1,\dots,m-1,m+1,\dots,n\}}$, we have
\[ 
\sum_{\pi\in G_{r,n}(W)} \chi_{1,h}(\pi)q^{\fmaj(\pi)}=\chi_{1,h}(W)q^{\fmaj(W)}[nr]_{\zeta^h q}.
\]
\item For any $k\in [n-1]$ and any word $W\in G_{r,\{n-k+1,\dots,n\}}$, we have
\[
\sum_{\pi\in G_{r,n}(W)} \chi_{1,h}(\pi)q^{\fmaj(\pi)}=\chi_{1,h}(W)q^{\fmaj(W)}[(k+1)r]_{\zeta^h q}\cdots [nr]_{\zeta^h q}.
\]
\end{enumerate}
\end{thm}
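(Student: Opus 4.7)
The plan is to derive (i) from the HLR insertion lemma (\ref{eqn:HLR-result}) by summing over colors, and then to obtain (ii) by iterating (i).

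For part (i), every $\pi \in G_{r,n}(W)$ arises from $W$ by inserting $m^t$ at some position for a unique $t \in [0,r-1]$, so $\col(\pi) = \col(W)+t$ and
\[
\chi_{1,h}(\pi)\, q^{\fmaj(\pi)} \;=\; \zeta^{h\col(W)}\, q^{\col(W)} \cdot (\zeta^h q)^t \cdot q^{r\,\maj_F(\pi)}.
\]
I would split the sum by the color $t$:
\[
\sum_{\pi \in G_{r,n}(W)} \chi_{1,h}(\pi)\, q^{\fmaj(\pi)} \;=\; \chi_{1,h}(W)\, q^{\col(W)} \sum_{t=0}^{r-1}(\zeta^h q)^t \sum_{\pi \in T(W;m^t)} q^{r\,\maj_F(\pi)}.
\]
Applying (\ref{eqn:HLR-result}) with $q \mapsto q^r$ evaluates the inner sum to $q^{r\,\maj_F(W)}[n]_{q^r}$, independently of $t$, while the outer color sum is $[r]_{\zeta^h q}$. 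The identity $[n]_{q^r}[r]_{\zeta^h q} = [nr]_{\zeta^h q}$ — which holds because $\zeta^r = 1$ collapses both sides to $(1-q^{nr})/(1-\zeta^h q)$ — then delivers the claim.

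For part (ii), I would first observe that the proof of (i) uses the length-$(n{-}1)$ hypothesis only to determine the length after insertion; the same reasoning yields the analogous identity for any word $W \in G_{r,A}$ and any $m \notin A$, with the factor $[(|A|+1)r]_{\zeta^h q}$ in place of $[nr]_{\zeta^h q}$. Starting from $W \in G_{r,\{n-k+1,\dots,n\}}$ of length $k$ and inserting successively the letters $n-k, n-k-1,\dots, 1$ with every color, the $j$th step multiplies the running generating function by $[(k+j)r]_{\zeta^h q}$. Taking the product over $j = 1,\dots, n-k$ produces the claimed factorization $[(k+1)r]_{\zeta^h q} \cdots [nr]_{\zeta^h q}$, and the uniqueness of the iterative decomposition — each step inserts a letter not previously present, with its color recorded — ensures that each element of $G_{r,n}(W)$ is enumerated exactly once.

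The main obstacle is checking that the inner sum in (\ref{eqn:HLR-result}) is genuinely independent of the color $t$, i.e., that the position of $m^t$ relative to the entries of $W$ in the $F$-order (\ref{eqn:F-linear-order}) does not affect the total contribution of the insertion positions. This is precisely what the HLR insertion lemma guarantees, so beyond invoking that input the argument reduces to the bookkeeping above.
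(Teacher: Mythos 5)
Your proposal is correct and matches the paper's own argument: part (i) is proved by splitting over the inserted color $t$, applying the HLR insertion lemma with $q\mapsto q^r$, and collapsing $[n]_{q^r}[r]_{\zeta^h q}=[nr]_{\zeta^h q}$ via $\zeta^r=1$; part (ii) is obtained by iterating the insertion (the paper phrases this as reverse induction on $k$, which is the same bookkeeping). No gaps.
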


\begin{proof} (i) Note that $\chi_{1,h}(\pi)=\zeta^{h\cdot\col(\pi)}$ and $\col(\pi)=\col(W)+t$.
By (\ref{eqn:fmaj-colored}) and (\ref{eqn:HLR-result}), we have
\begin{align*}
\sum_{\pi\in T(W; m^t)} \chi_{1,h}(\pi)q^{\fmaj(\pi)}
&=\sum_{\pi\in T(W; m^t)}\zeta^{h\cdot\col(\pi)}q^{r\cdot\maj_F(\pi)+\col(\pi)}  \\
&=\sum_{\pi\in T(W; m^t)} q^{r\cdot\maj_F(\pi)}\big(\zeta^h q\big)^{\col(\pi)}  \\
&= \big(\zeta^h q\big)^{\col(W)+t}\cdot \big(q^r\big)^{\maj_F(W)} [n]_{q^r}  \\
&= \big(\zeta^h q\big)^{t}\cdot \chi_{1,h}(W)q^{\fmaj(W)} [n]_{q^r}. 
\end{align*}
Hence
\begin{align*}
\sum_{\pi\in G_{r,n}(W)} \chi_{1,h}(\pi)q^{\fmaj(\pi)} &=\sum_{t=0}^{r-1} \left(\sum_{\pi\in T(W;m^t)} \chi_{1,h}(\pi)q^{\fmaj(\pi)} \right) \\
&=\chi_{1,h}(W)q^{\fmaj(W)} [n]_{q^r}\Big(1+\zeta^h q+\cdots+\big(\zeta^h q\big)^{r-1}\Big)\\
&=\chi_{1,h}(W)q^{\fmaj(W)} [nr]_{\zeta^h q},
\end{align*}
where the last identity is due to the fact that $\zeta^r=1$.

(ii) We shall prove the assertion by reverse induction on $k$. The initial case $k=n-1$ is proved in (i) when $m=1$. Suppose the assertion holds for $k\ge j$. Given $W\in G_{r,\{n-j+2,\dots,n\}}$, we partition $G_{r,n}(W)$ into the subsets $G_{r,n}(W')$ for every word $W'$ of the set
\[
T(W):=\bigcup_{t=0}^{r-1} T(W; (n-j+1)^t).
\]
By (i), (\ref{eqn:HLR-result}) and induction hypothesis,
\begin{align*}
\sum_{\pi\in G_{r,n}(W)} \chi_{1,h}(\pi)q^{\fmaj(\pi)} &=\sum_{W'\in T(W)}\left( \sum_{\pi\in G_{r,n}(W')} \chi_{1,h}(\pi)q^{\fmaj(\pi)} \right) \\
&=\sum_{W'\in T(W)}\Big( \chi_{1,h}(W')q^{\fmaj(W')} [(j+1)r]_{\zeta^h q}\cdots[nr]_{\zeta^h q}\Big) \\
&=\left(\sum_{W'\in T(W)} \chi_{1,h}(W')q^{\fmaj(W')}\right)[(j+1)r]_{\zeta^h q}\cdots[nr]_{\zeta^h q} \\
&=\chi_{1,h}(W)q^{\fmaj(W)}[jr]_{\zeta^h q}[(j+1)r]_{\zeta^h q}\cdots[nr]_{\zeta^h q}.
\end{align*}
The results follow.
\end{proof}

\smallskip
\subsection{Extended Insertion Lemma.}  Eu \emph{et al.} \cite[Theorem 1.3]{EFHLS_20} derived the generating polynomial for the increment of major index with sign resulting from inserting an additional pair of consecutive elements in any place of a given permutation.

\begin{defi} {\rm Given $A\subset [n]$ with $m,m+1\notin A$ and a word $W\in G_{r,A}$, let $T(W; m^s,(m+1)^t)$ denote the set of words obtained from $W$ by inserting $m^s$ and $(m+1)^t$ adjacently for some $s,t\in [0,r-1]$.
}
\end{defi}

\smallskip
For any $m\in [n-1]$, let $W=(W_1, \dots,W_{n-2})\in G_{r,A}$, where $A=\{1,\dots,m-1,m+2,\dots,n\}$. Every member of $G_{r,n}(W)$ can be obtained from $W$ by inserting $m^s$ and $(m+1)^t$ for some $s,t\in [0,r-1]$.
Let $F_{r,n}(W)\subset G_{r,n}(W)$ denote the subset of members such that the elements $m,m+1$ are adjacent and have the same color, i.e.,
\[
F_{r,n}(W):=\bigcup_{t=0}^{r-1} T(W; m^t,(m+1)^t).
\]
Given $\pi=(\pi_1,\dots,\pi_n)\in G_{r,n}(W)\setminus F_{r,n}(W)$, let $\pi_i=m^{z_i}$ and $\pi_j=(m+1)^{z_j}$, for some $z_i, z_j\in [0,r-1]$. Notice that $\pi_i,\pi_j$ are either adjacent and $z_i\neq z_j$, or not adjacent. There is an immediate involution $\pi\mapsto \pi'$ on the set $G_{r,n}(W)\setminus F_{r,n}(W)$, with $|\inv(|\pi'|)-\inv(|\pi|)|=1$, $\col(\pi')=\col(\pi)$ and $\maj_F(\pi')=\maj_F(\pi)$, such that $\pi'$ is obtained from $\pi$ by replacing the ordered pair $(\pi_i,\pi_j$) with $((m+1)^{z_i},m^{z_j})$. Since $\chi_{-1,h}(\pi)=(-1)^{\inv(|\pi|)}\zeta^{h\cdot\col(\pi)}$ and $\fmaj(\pi)=r\cdot\maj_F(\pi)+\col(\pi)$, we have 
\begin{equation} \label{eqn:F_r,n}
\sum_{\pi\in G_{r,n}(W)} \chi_{-1,h}(\pi)q^{\fmaj(\pi)}=\sum_{\pi\in F_{r,n}(W)} \chi_{-1,h}(\pi)q^{\fmaj(\pi)}
\end{equation}
The right hand side of (\ref{eqn:F_r,n}) can be derived by using the proof of the result in \cite[Lemma 4.3]{EFHLS_20}. We describe the method below.

Every member of $F_{r,n}(W)$ can be obtained from $W$ by inserting $m^t,(m+1)^t$ adjacently,  for some $t\in [0,r-1]$,  to the left of $W$, between two entries of $W$, or to the right of $W$, i.e., one of the $n-1$ \emph{spaces} of $W$. These spaces are indexed by $0,1,\dots,n-2$ from left to right. Using the order (\ref{eqn:F-linear-order}),  the $j$th space, which is between $W_j$ and $W_{j+1}$ is called a $RL$-\emph{space of $W$ relative to $m^t$} if it satisfies one of the following conditions:
\begin{itemize}
\item $j=n-2$ and $W_{n-2} < m^t$,
\item $j=0$ and $m^t<W_1$
\item $0<j<n$ and $W_j>W_{j+1}>m^t$,
\item $0<j<n$ and $m^t>W_j>W_{j+1}$, or
\item $0<j<n$ and $W_j<m^t<W_{j+1}$.
\end{itemize} 
Any space which is not a $RL$-space is called a $LR$-space (relative to $m^t$). Suppose there are $d$ $RL$-spaces of $W$ relative to $m^t$, we label the $RL$-spaces from right to left with $0,1,\dots,d-1$ and label the $LR$-spaces from left to right with $d, d+1,\dots,n-2$. By the same argument as in the proof of \cite[Lemma 4.3]{EFHLS_20}, we have the following result.

\begin{lem} \label{lem:insertion_colored_pair} If $\pi$ is obtained from $W$ by inserting the pair $(x_1,x_2)$ adjacently at the $j$th space of $W$ $(0\le j\le n-2)$, where $(x_1,x_2)$ is either $(m^t,(m+1)^t)$ or $((m+1)^t,m^t)$ for some $t\in [0,r-1]$, then we have
\begin{equation} \label{eqn:(m,m+1)}
\maj_F(\pi)=\left\{ \begin{array}{ll}
\maj_F(W)+a_j+b_j & \mbox{if $(x_1,x_2)=(m^t,(m+1)^t)$,}\\
\maj_F(W)+a_j+b_j+j+1 & \mbox{if $(x_1,x_2)=((m+1)^t,m^t)$,}
                    \end{array}
            \right.         
\end{equation}
where $a_j$ is the label of the $j$th space and $b_j$ is the number of $RL$-spaces relative to $m^t$ appearing to the right of the $j$th space.
\end{lem}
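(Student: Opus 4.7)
The plan is to apply the single-element insertion lemma of Han and Haglund--Loehr--Remmel twice, inserting the pair $(x_1,x_2)$ one entry at a time. First I would insert $x_1$ into $W$ at the $j$-th space to form an intermediate word $W^*$ of length $n-1$, and then insert $x_2$ at the $(j+1)$-th space of $W^*$, so that $x_2$ lands immediately to the right of $x_1$ and the resulting word is $\pi$. Each insertion increments $\maj_F$ by the label of the chosen space in the appropriate RL/LR labelling, which would give
\[
\maj_F(\pi)-\maj_F(W)=\alpha+\beta,
\]
where $\alpha$ is the label of the $j$-th space of $W$ relative to $x_1$ and $\beta$ is the label of the $(j+1)$-th space of $W^*$ relative to $x_2$.

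The first summand is easy. Because $m^t$ and $(m+1)^t$ share the same color and $m,m+1\notin A$, every entry of $W$ is greater than $m^t$ in the order $F$ if and only if it is greater than $(m+1)^t$. Consequently the RL/LR classification of any space of $W$ is the same whether we use $m^t$ or $(m+1)^t$ as the reference element, and so $\alpha=a_j$ in both orderings of the pair.

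The substance of the argument is identifying $\beta$. I would analyze the two newly created spaces of $W^*$ (between $W_j$ and $x_1$, and between $x_1$ and $W_{j+1}$) by splitting on the positions of $W_j$ and $W_{j+1}$ relative to $m^t$, and I would dispatch the boundary cases $j=0$ and $j=n-2$ separately against the corresponding clauses in the definition of RL-space. The short verification yields a clean dichotomy: if $(x_1,x_2)=(m^t,(m+1)^t)$, then the $j$-th space of $W^*$ is always LR and the $(j+1)$-th space is always RL, both relative to $(m+1)^t$; if $(x_1,x_2)=((m+1)^t,m^t)$, the roles of these two new spaces are swapped (relative to $m^t$). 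Every other space of $W^*$ is in an obvious bijection with a space of $W$ and inherits its RL/LR status.

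The main obstacle, and the step I expect to require the most care, is translating the RL-position of the $(j+1)$-th space of $W^*$ back into the data $(a_j,b_j)$ of the original word $W$. For the first ordering the $(j+1)$-th space of $W^*$ is RL, so its label equals the number of RL-spaces of $W^*$ strictly to its right, which collapses precisely to $b_j$. For the second ordering the $(j+1)$-th space is LR; letting $d'$ denote the total number of RL-spaces of $W^*$ and using the identity $d'=d-\delta_j+1$ (where $d$ is the number of RL-spaces of $W$ relative to $m^t$ and $\delta_j\in\{0,1\}$ records whether space $j$ of $W$ was itself RL), a short telescoping of the formula ``label equals $d'+$ number of LR-spaces of $W^*$ strictly to the left'' collapses to $b_j+j+1$. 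Combining these with $\alpha=a_j$ produces exactly the two cases of (\ref{eqn:(m,m+1)}).
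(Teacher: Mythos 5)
Your proof is correct, and it supplies in full the argument that the paper itself only cites (the paper defers to the proof of Lemma~4.3 of Eu \emph{et al.}, which is exactly this kind of two-step insertion). The three key identifications all check out: the labellings of the spaces of $W$ relative to $m^t$ and to $(m+1)^t$ coincide because no entry of $W$ lies between $m^t$ and $(m+1)^t$ in the order $F$ (so $\alpha=a_j$); the space immediately to the right of the first inserted letter is RL relative to the second letter when $(x_1,x_2)=(m^t,(m+1)^t)$ and LR when $(x_1,x_2)=((m+1)^t,m^t)$, while every other space of $W^*$ inherits its status from $W$; and the resulting labels collapse to $b_j$ and to $(d-\delta_j+1)+\bigl(j-(d-b_j-\delta_j)\bigr)=b_j+j+1$ respectively, matching \eqref{eqn:(m,m+1)}.
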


By Lemma \ref{lem:insertion_colored_pair} and the proof of  \cite[Theorem 1.3]{EFHLS_20}, we have
\begin{equation} \label{eqn:maj_F(W)}
\sum_{\pi\in T(W; m^t,(m+1)^t)} (-1)^{\inv(|\pi|)}q^{\maj_F(\pi)} = (-1)^{\inv(|W|)}q^{\maj_F(W)} [n-1]_{(-1)^{n}q}[n]_{(-1)^{n-1}q}.
\end{equation}
Note that (\ref{eqn:HLR-result}) and (\ref{eqn:maj_F(W)}) hold for each $t\in [0,r-1]$ since the proofs of \cite[Corollary 4.2]{HLR} and \cite[Theorem 1.3]{EFHLS_20} work well for individual relative order of $m^t$ with respect to the entries of $W$. This leads to the following signed analogue of Theorem \ref{thm:epsilon=1} for $\epsilon=-1$.

\begin{thm}  \label{thm:Main-III} The following results hold.
\begin{enumerate}
\item For any $m\in [n-1]$ and any word $W\in G_{r,\{1,\dots,m-1,m+2,\dots,n\}}$, we have
\[
\sum_{\pi\in G_{r,n}(W)}\chi_{-1,h}(\pi)q^{\fmaj(\pi)} =\chi_{-1,h}(W)q^{\fmaj(W)} [(n-1)r]_{(-1)^{n}\zeta^h q}[nr]_{(-1)^{n-1}\zeta^h q}.
\]
\item For $1\le k\le \lfloor\frac{n-1}{2}\rfloor$ and any word $W\in G_{r,\{2k+1,\dots,n\}}$, we have
\[
\sum_{\pi\in G_{r,n}(W)} \chi_{-1,h}(\pi)q^{\fmaj(\pi)}=\chi_{-1,h}(W)q^{\fmaj(W)}[(n-2k+1)r]_{(-1)^{n-2k}\zeta^h q}\cdots [nr]_{(-1)^{n-1}\zeta^h q}.
\]
\end{enumerate}
\end{thm}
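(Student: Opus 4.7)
The plan is to derive part (i) directly from the already-supplied involution reduction (\ref{eqn:F_r,n}) and the identity (\ref{eqn:maj_F(W)}) by the substitution $q \mapsto q^{r}$, and then to derive part (ii) by forward induction on $k$ with part (i) as the inductive step.

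For part (i), I would begin with (\ref{eqn:F_r,n}), which says the signed sum over $G_{r,n}(W)$ coincides with the sum over $F_{r,n}(W)=\bigsqcup_{t=0}^{r-1}T(W;m^{t},(m+1)^{t})$. On each piece $T(W;m^{t},(m+1)^{t})$ the colour weight is constant and equal to $\col(W)+2t$, so
\begin{equation*}
\chi_{-1,h}(\pi)\,q^{\fmaj(\pi)} = (-1)^{\inv(|\pi|)}\,(\zeta^{h}q)^{\col(W)+2t}\,q^{r\maj_{F}(\pi)}.
\end{equation*}
Replacing $q$ by $q^{r}$ in (\ref{eqn:maj_F(W)})---legitimate since that identity is a polynomial identity valid for each fixed $t$---and multiplying by the constant $(\zeta^{h}q)^{\col(W)+2t}$, the per-$t$ contribution becomes $\chi_{-1,h}(W)\,q^{\fmaj(W)}(\zeta^{h}q)^{2t}[n-1]_{(-1)^{n}q^{r}}[n]_{(-1)^{n-1}q^{r}}$; summing over $t\in[0,r-1]$ extracts the factor $[r]_{(\zeta^{h}q)^{2}}$. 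It then remains to verify the algebraic identity
\begin{equation*}
[r]_{(\zeta^{h}q)^{2}}\,[n-1]_{(-1)^{n}q^{r}}\,[n]_{(-1)^{n-1}q^{r}} \;=\; [(n-1)r]_{(-1)^{n}\zeta^{h}q}\,[nr]_{(-1)^{n-1}\zeta^{h}q}.
\end{equation*}
Setting $a=\zeta^{h}q$ so that $a^{r}=q^{r}$ and using the factorisation $[NM]_{x}=[N]_{x^{M}}[M]_{x}$ on the right-hand side, the identity reduces to two short checks: when $r$ is odd, $(1-a^{2r})=(1-a^{r})(1+a^{r})$ gives $[r]_{a}[r]_{-a}=[r]_{a^{2}}$ and everything matches term by term; when $r$ is even the excess factor $(1-a^{r})/(1+a^{r})$ is precisely what is needed to flip one of the $q^{r}$-subscripts by a sign (splitting on the parity of $n$, both subcases are immediate).

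For part (ii) I would induct forwards on $k$, with base case $k=1$ being part (i) applied to $m=1$. For the inductive step, fix $W\in G_{r,\{2k+3,\ldots,n\}}$ and observe that every $\pi\in G_{r,n}(W)$ restricts uniquely to a word $W'\in G_{r,\{2k+1,\ldots,n\}}$ (its subword on the letters $\{2k+1,\ldots,n\}$); thus $G_{r,n}(W)=\bigsqcup_{W'}G_{r,n}(W')$ where $W'$ ranges over all extensions of $W$ by an insertion of $2k+1$ and $2k+2$ with arbitrary colours in arbitrary positions. The inductive hypothesis applied to each $G_{r,n}(W')$ factors out the tail $[(n-2k+1)r]_{(-1)^{n-2k}\zeta^{h}q}\cdots[nr]_{(-1)^{n-1}\zeta^{h}q}$, and the remaining outer sum $\sum_{W'}\chi_{-1,h}(W')q^{\fmaj(W')}$ is exactly the left-hand side of part (i) within the alphabet $\{2k+1,\ldots,n\}$ of size $n-2k$. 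Part (i) then contributes the two missing factors $[(n-2k-1)r]_{(-1)^{n-2k}\zeta^{h}q}$ and $[(n-2k)r]_{(-1)^{n-2k-1}\zeta^{h}q}$; since $(-1)^{n-2k-2}=(-1)^{n-2k}$, concatenating with the tail produces precisely the product claimed for index $k+1$.

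The main obstacle is the $q$-identity at the end of part (i). The factorisation $[NM]_{x}=[N]_{x^{M}}[M]_{x}$ exposes different-looking products depending on the parity of $r$, because $(-1)^{nr}$ collapses to $1$ when $r$ is even but to $(-1)^{n}$ when $r$ is odd, so a short case split on the parities of $r$ and $n$ seems unavoidable; a unified case-free derivation is not evident. Everything else is careful bookkeeping of colours, signs and inversion counts on top of the involution already supplied in the excerpt.
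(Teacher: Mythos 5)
Your proposal is correct and matches the paper's own argument: part (i) is proved there exactly as you describe, by combining the involution reduction (\ref{eqn:F_r,n}) with (\ref{eqn:maj_F(W)}) under $q\mapsto q^{r}$ and summing the geometric factor $[r]_{(\zeta^{h}q)^{2}}$ over the colors, while part (ii) is dispatched by the same insertion-and-induction scheme (the paper phrases it as the argument of Theorem \ref{thm:epsilon=1}(ii); your forward induction on $k$ is the same decomposition read in the opposite parametrization). Your explicit verification of the identity $[r]_{(\zeta^{h}q)^{2}}[n-1]_{(-1)^{n}q^{r}}[n]_{(-1)^{n-1}q^{r}}=[(n-1)r]_{(-1)^{n}\zeta^{h}q}[nr]_{(-1)^{n-1}\zeta^{h}q}$, including the parity split on $r$, is a detail the paper leaves implicit, and your checks are accurate.
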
 

\begin{proof} (i)  Note that $\chi_{-1,h}(\pi)=(-1)^{\inv(|\pi|)}\zeta^{h\cdot\col(\pi)}$ and $\col(\pi)=\col(W)+2t$. By (\ref{eqn:fmaj-colored}) and (\ref{eqn:maj_F(W)}), we have 
\begin{align*}
&\sum_{\pi\in T(W; m^t,(m+1)^t)} \chi_{-1,h}(\pi)q^{\fmaj(\pi)} \\
&\qquad =\sum_{\pi\in T(W; m^t,(m+1)^t)} (-1)^{\inv(|\pi|)}q^{r\cdot\maj_F(\pi)}\big(\zeta^h q\big)^{\col(\pi)}  \\
&\qquad = \big(\zeta^h q\big)^{\col(W)+2t}\cdot (-1)^{\inv(|W|)}\big(q^r\big)^{\maj_F(W)} [n-1]_{(-1)^{n}q^r}[n]_{(-1)^{n-1}q^r} \\
&\qquad = \big(\zeta^h q\big)^{2t}\cdot \chi_{-1,h}(W)q^{\fmaj(W)} [n-1]_{(-1)^{n}q^r}[n]_{(-1)^{n-1}q^r}. 
\end{align*}
Hence
\begin{align*}
&\sum_{\pi\in F_{r,n}(W)} \chi_{-1,h}(\pi)q^{\fmaj(\pi)} \\
&\qquad =\sum_{t=0}^{r-1} \left(\sum_{\pi\in T(W; m^t,(m+1)^t)} \chi_{-1,h}(\pi)q^{\fmaj(\pi)} \right) \\
&\qquad =\chi_{-1,h}(\pi)q^{\fmaj(W)} [n-1]_{(-1)^{n}q^r}[n]_{(-1)^{n-1}q^r} \Big(1+\big(\zeta^h q\big)^2+\cdots+\big(\zeta^h q\big)^{2(r-1)}\Big)\\
&\qquad =\chi_{-1,h}(W)q^{\fmaj(W)} [(n-1)r]_{(-1)^{n}\zeta^h q}[nr]_{(-1)^{n-1}\zeta^h q}.
\end{align*}
Along with (\ref{eqn:F_r,n}), the assertion (i) follows.

(ii) Using a similar argument to that for the proof of Theorem \ref{thm:epsilon=1}(ii), the assertion (ii) can be proved by induction on $k$.
\end{proof}

\subsection{Proof of Theorem \ref{thm:Main-result-coset}.} The following lemma will be used in the proof of Theorem \ref{thm:Main-result-coset}.

\begin{lem} \label{lem:(n-k)-insertion} For $1\le k\le n-1$, let $W$ be the word $(n-k+1,\dots,n)$ in $G_{r,\{n-k+1,\dots,n\}}$. Then
\[
\sum_{t=0}^{r-1}\left(\sum_{W'\in T(W; (n-k)^t)} \chi_{-1,h}(W')q^{\fmaj(W')}\right)= [k+1]_{-q^r}[r]_{\zeta^h q}.
\]
\end{lem}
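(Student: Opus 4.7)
\bigskip

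\textbf{Proof plan for Lemma \ref{lem:(n-k)-insertion}.}
The plan is to compute each term $\chi_{-1,h}(W')q^{\fmaj(W')}$ explicitly, parametrized by the space $j\in\{0,1,\dots,k\}$ at which $(n-k)^t$ is inserted and by the color $t\in[0,r-1]$, and then to observe that the resulting double sum factors.

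First I would record the baseline: the word $W=(n-k+1,n-k+2,\dots,n)$ is increasing in the order $F$ and carries color zero in every position, so $\inv(|W|)=0$, $\col(W)=0$, and $\maj_F(W)=0$. Next, for fixed $j$ and $t$, let $W'$ be the word obtained by placing $(n-k)^t$ in the $j$-th space of $W$. Since $n-k$ is smaller than every entry of $|W|$, all inversions of $|W'|$ are created between the inserted letter and the $j$ letters standing to its left, giving $\inv(|W'|)=j$. Clearly $\col(W')=t$.

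The main step is to show $\maj_F(W')=j$ for every $t$. I would split on cases according to whether $t=0$ or $t\ge 1$, but the key unifying observation is the order (\ref{eqn:F-linear-order}): every letter of color $0$ exceeds every letter of positive color, so regardless of $t$ we have $(n-k)^t < (n-k+i)$ for all $i\ge 1$ (and for $t=0$ also $(n-k)^0<(n-k+i)$ by the natural order within color $0$). This forces an ascent immediately after the inserted letter whenever $j\le k-1$, and a descent immediately before it whenever $j\ge 1$ (since letters of color $0$ strictly to the left exceed $(n-k)^t$). Combined with the fact that the remaining entries of $W'$ are in natural increasing order of color $0$, the descent set of $W'$ is exactly $\{j\}$ when $1\le j\le k$ and is empty when $j=0$. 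Hence $\maj_F(W')=j$ in all cases, and consequently $\fmaj(W')=rj+t$ and $\chi_{-1,h}(W')q^{\fmaj(W')}=(-q^r)^j(\zeta^h q)^t$.

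The conclusion is then the factorization
\[
\sum_{t=0}^{r-1}\sum_{j=0}^{k}(-q^r)^j(\zeta^h q)^t=\left(\sum_{j=0}^{k}(-q^r)^j\right)\left(\sum_{t=0}^{r-1}(\zeta^h q)^t\right)=[k+1]_{-q^r}[r]_{\zeta^h q}.
\]
I expect the only real subtlety to be the descent analysis verifying that $(n-k)^t$ behaves the same way for $t=0$ and $t\ge 1$ inside the order $F$; once this is settled the rest is a bookkeeping factorization.
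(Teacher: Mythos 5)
Your proposal is correct and follows essentially the same route as the paper: both compute $\inv(|W'|)=j$, $\col(W')=t$, and $\maj_F(W')=j$ for the insertion at the $j$th space (the paper phrases the $\maj_F$ computation via its $RL$-space labelling, you via a direct descent-set analysis, but the content is identical), and then factor the resulting double sum into $[k+1]_{-q^r}[r]_{\zeta^h q}$. No gaps.
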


\begin{proof} For each $t\in [0,r-1]$, notice that $(n-k)^t$ is less than every entry of $W$ and that
the leftmost space of $W$ is the unique $RL$-space of $W$ relative to $(n-k)^t$. Moreover, $\inv(W)=0$, $\col(W)=0$, and $\maj_F(W)=0$. If $W'$ is obtained from $W$ by inserting $(n-k)^t$ at the $j$th space from left to right, $0\le j\le k$, then $\inv(|W'|)=j$, $\col(W')=t$ and $\maj_F(W')=j$. Hence
\begin{align*}
\sum_{W'\in T(W; (n-k)^t)} \chi_{-1,h}(W')q^{\fmaj(W')}
&=\sum_{W'\in T(W; (n-k)^t)}(-1)^{\inv(|W'|)}q^{r\cdot\maj_F(W')}\big(\zeta^h q\big)^{\col(W')} \\
&=\big(\zeta^h q\big)^t \sum_{j=0}^{k} (-1)^j\big(q^r\big)^j \\
&=\big(\zeta^h q\big)^{t} [k+1]_{-q^r}.
\end{align*}
Hence
\[
\sum_{t=0}^{r-1}\left(\sum_{\pi\in T(W; (n-k)^t)} \chi_{-1,h}(\pi)q^{\fmaj(\pi)}\right)=[k+1]_{-q^r}\Big(1+\zeta^h q+\cdots +\big(\zeta^h q\big)^{r-1}\Big).
\]
The result follows.
\end{proof}

For any integer $d$ and a word $W=((\sigma_1,\dots,\sigma_k),\z)\in G_{r,k}$, let $W+d:=((\sigma_1+d,\dots,\sigma_k+d),\z)$, a member of $G_{r,\{d+1,\dots,d+k\}}$. The following result, derived from Theorems \ref{thm:epsilon=1}(i) and \ref{thm:Main-III}(i), will be used in the proof of Theorem \ref{thm:Main-result-coset}.

\begin{cor} \label{cor:W-2} For $1\le k\le n-2$, $k+2\le b\le n$, and any word $W\in G_{r,\{b-k+1,\dots,b\}}$, we have
\[
\sum_{\pi\in G_{r,n}(W-2)}\chi_{
\epsilon,h}(\pi)q^{\fmaj(\pi)}=\sum_{\pi\in G_{r,n}(W)}\chi_{\epsilon,h}(\pi)q^{\fmaj(\pi)}.
\]
\end{cor}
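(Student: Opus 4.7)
The plan is to evaluate both sides of the claimed identity in closed form by iterating the insertion lemmas, and to observe that the two evaluations coincide because the prefactor $\chi_{\epsilon,h}(\cdot)\,q^{\fmaj(\cdot)}$ is invariant under the shift $W\mapsto W-2$.

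For $\epsilon=1$, I would mimic the reverse induction from the proof of Theorem \ref{thm:epsilon=1}(ii), but now allowing the seed word to lie in $G_{r,A}$ for an arbitrary subset $A\subset[n]$ of size $k$. At each inductive step, one missing element of $[n]\setminus A$ is inserted via Theorem \ref{thm:epsilon=1}(i) (or rather its underlying single-element HLR insertion), contributing a factor $[jr]_{\zeta^h q}$ that depends only on the current length $j$, not on which element is inserted or on its relative position. The result is
\[
\sum_{\pi\in G_{r,n}(V)}\chi_{1,h}(\pi)\,q^{\fmaj(\pi)}=\chi_{1,h}(V)\,q^{\fmaj(V)}\prod_{j=k+1}^{n}[jr]_{\zeta^h q}
\]
for both $V=W$ and $V=W-2$, giving the same product factor on the two sides.

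For $\epsilon=-1$, I would run the parallel iteration using Theorem \ref{thm:Main-III}(i), now pair-inserting consecutive missing integers. The missing sets are $[1,b-k]\cup[b+1,n]$ for $W$ and $[1,b-k-2]\cup[b-1,n]$ for $W-2$; the two intervals in each missing set have the same parities, so identical pairing schemes apply to both words, each pair insertion contributing a length-dependent factor and (when an interval has odd length) any residual orphan being handled symmetrically on both sides via Lemma \ref{lem:(n-k)-insertion}. Both iterations therefore produce the same product factor, independent of whether the underlying word is $W$ or $W-2$.

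To complete the argument I would verify the invariance of the prefactor. The entrywise shift $W\mapsto W-2$ preserves all colors and the relative order of absolute values, so $\inv(|W|)=\inv(|W-2|)$ and $\col(W)=\col(W-2)$; moreover, the linear order $F$ in (\ref{eqn:F-linear-order}) is invariant under a uniform value shift, so the descent positions of $W$ are preserved and $\maj_F(W)=\maj_F(W-2)$, giving $\fmaj(W)=\fmaj(W-2)$. Hence $\chi_{\epsilon,h}(W)\,q^{\fmaj(W)}=\chi_{\epsilon,h}(W-2)\,q^{\fmaj(W-2)}$ and the identity follows. The main obstacle is the $\epsilon=-1$ case with $b-k$ or $n-b$ odd, where iteration by pair insertion alone does not exhaust the missing set; the observation that rescues the argument is that the parity pattern of the missing set is preserved under $W\mapsto W-2$, so whatever auxiliary orphan-handling step is used on one side applies identically on the other.
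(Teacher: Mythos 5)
Your $\epsilon=1$ argument is sound: the unsigned insertion identity (\ref{eqn:HLR-result}) is independent of the value and relative position of the inserted element, so iterating it over any missing set does give the closed form $\chi_{1,h}(V)q^{\fmaj(V)}\prod_{j=k+1}^{n}[jr]_{\zeta^h q}$ for $V=W$ and $V=W-2$ alike, and your observation that the shift $W\mapsto W-2$ preserves $\inv(|W|)$, $\col(W)$ and $\maj_F(W)$ is correct. The gap is in the $\epsilon=-1$ case. There is no signed analogue of (\ref{eqn:HLR-result}) for inserting a single element: as the inserted element moves across the word, the sign $(-1)^{\inv(|\pi|)}$ alternates in a way that is coupled to the word-dependent $RL$-space labelling, so $\sum_{\pi\in T(V;m^t)}\chi_{-1,h}(\pi)q^{\fmaj(\pi)}$ does not factor for a general $V$ --- this is exactly why the paper works with pair insertions in the signed case. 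Lemma \ref{lem:(n-k)-insertion}, which you invoke to handle the orphans, is proved only for the very special seed $W=(n-k+1,\dots,n)$ (uncolored, increasing, with the inserted element below every entry); it does not apply to the arbitrary colored word $W$ of the corollary, nor to the partially built words arising midway through your iteration. So whenever $b-k$ or $n-b$ is odd, your pairing scheme leaves unpaired elements whose contributions are not clean factors; note also that the two potential orphans (one below the word, one above it) are not adjacent in value, so they cannot be absorbed as a pair via Theorem \ref{thm:Main-III}(i). Your closing appeal to ``symmetry'' is precisely the point that needs proof, and it is not automatic, because the missing sets $[1,b-k]\cup[b+1,n]$ and $[1,b-k-2]\cup[b-1,n]$ are genuinely different subsets of $[n]$.

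The repair --- and the paper's actual proof --- is to avoid evaluating either side. Since $k+2\le b\le n$, one performs a single outermost pair insertion on each side: every $\pi\in G_{r,n}(W)$ arises from some $U\in G_{r,\{3,\dots,n\}}(W)$ by inserting $1^s,2^t$, and every $\pi\in G_{r,n}(W-2)$ from some $V\in G_{r,\{1,\dots,n-2\}}(W-2)$ by inserting $(n-1)^s,n^t$; Theorem \ref{thm:Main-III}(i) extracts the same factor $[(n-1)r]_{(-1)^{n}\zeta^h q}[nr]_{(-1)^{n-1}\zeta^h q}$ in both cases, and the shift $U\mapsto U-2$ is a bijection of $G_{r,\{3,\dots,n\}}(W)$ onto $G_{r,\{1,\dots,n-2\}}(W-2)$ preserving $\chi_{-1,h}(\cdot)q^{\fmaj(\cdot)}$, so the two remaining (unevaluated) inner sums agree term by term. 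This works for every parity and every word $W$, with no closed-form evaluation needed.
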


\begin{proof} We first prove the case $\epsilon=-1$. Consider the following two sets $G_{r,\{3,\dots,n\}}(W)$ and $G_{r,\{1,\dots,n-2\}}(W-2)$. The map $U\mapsto V$ from $G_{r,\{3,\dots,n\}}(W)$ onto $G_{r,\{1,\dots,n-2\}}(W-2)$ defined by $V=U-2$ is a bijection 
such that $\inv(|V|)=\inv(|U|)$, $\col(V)=\col(U)$ and $\fmaj(V)=\fmaj(U)$.
We partition $G_{r,n}(W-2)$ into subsets $G_{r,n}(V)$ for all $V\in G_{r,\{1,\dots,n-2\}}(W-2)$. Note that every member of $G_{r,n}(V)$ is obtained from $V$ by inserting $(n-1)^s$ and $n^t$ for some $s,t\in [0,r-1]$. By Theorem \ref{thm:Main-III}(i), we have
\[
\sum_{\pi\in G_{r,n}(V)} \chi_{-1,h}(\pi)q^{\fmaj(\pi)}=\chi_{-1,h}(V)q^{\fmaj(V)}[(n-1)r]_{(-1)^{n}\zeta^h q}[nr]_{(-1)^{n-1}\zeta^h q}.
\]
Moreover, we partition $G_{r,n}(W)$ into subsets $G_{r,n}(U)$ for all $U\in G_{r,\{3,\dots,n\}}(W)$. Note that every member of $G_{r,n}(U)$ is obtained from $U$ by inserting $1^s$ and $2^t$ for some $s,t\in [0,r-1]$. Likewise, we have
\[
\sum_{\pi\in G_{r,n}(U)} \chi_{-1,h}(\pi)q^{\fmaj(\pi)}=\chi_{-1,h}(U)q^{\fmaj(U)}[(n-1)r]_{(-1)^{n}\zeta^h q}[nr]_{(-1)^{n-1}\zeta^h q}.
\]
By the bijection between $G_{r,\{3,\dots,n\}}(W)$ onto $G_{r,\{1,\dots,n-2\}}(W-2)$ mentioned above, we have
\begin{align*}
&\sum_{\pi\in G_{r,n}(W-2)} \chi_{-1,h}(\pi)q^{\fmaj(\pi)} \\
&\qquad=\sum_{V\in G_{r,\{1,\dots,n-2\}}(W-2)}\left(\sum_{\pi\in G_{r,n}(V)} \chi_{-1,h}(\pi)q^{\fmaj(\pi)} \right) \\
&\qquad=\left(\sum_{V\in G_{r,\{1,\dots,n-2\}}(W-2)}\chi_{-1,h}(V)q^{\fmaj(V)} \right)[(n-1)r]_{(-1)^{n}\zeta^h q}[nr]_{(-1)^{n-1}\zeta^h q} \\
&\qquad=\left(\sum_{U\in G_{r,\{3,\dots,n\}}(W)}\chi_{-1,h}(U)q^{\fmaj(U)} \right)[(n-1)r]_{(-1)^{n}\zeta^h q}[nr]_{(-1)^{n-1}\zeta^h q} \\
&\qquad=\sum_{U\in G_{r,\{3,\dots,n\}}(W)}\left(\sum_{\pi\in G_{r,n}(U)} \chi_{-1,h}(\pi)q^{\fmaj(\pi)} \right) \\
&\qquad=\sum_{\pi\in G_{r,n}(W)} \chi_{-1,h}(\pi)q^{\fmaj(\pi)}.
\end{align*}
This proves the case $\epsilon=-1$ of the assertion. Using  Theorem \ref{thm:epsilon=1}(i) and a similar argument, we obtain
\[
\sum_{\pi\in G_{r,n}(W-1)} \chi_{1,h}(\pi)q^{\fmaj(\pi)}=
\sum_{\pi\in G_{r,n}(W)} \chi_{1,h}(\pi)q^{\fmaj(\pi)}.
\]
By iteration, the case $\epsilon=1$ of the assertion is proved. \end{proof}

We now prove Theorem \ref{thm:Main-result-coset}.

\medskip
\noindent
\emph{Proof of Theorem \ref{thm:Main-result-coset}.}   Let $W=(n-k+1,\dots,n)$. Consider the parities of $n$ and $k$.

Case 1: $n-k\equiv 0\pmod 2$.  By Theorems \ref{thm:epsilon=1} and \ref{thm:Main-III}, for $\epsilon\in\{1,-1\}$ we have
\[
\sum_{\pi\in G_{r,n}(W)} \chi_{\epsilon,h}(\pi)q^{\fmaj(\pi)}=[(k+1)r]_{\epsilon^{k}\zeta^h q}\cdots [nr]_{\epsilon^{n-1}\zeta^h q}.
\]

Case 2: $n-k\equiv 1\pmod 2$. For the case $\epsilon=1$, by Theorem \ref{thm:epsilon=1}, we also have
\begin{equation} \label{eqn:case_epsilon=1}
\sum_{\pi\in G_{r,n}(W)} \chi_{1,h}(\pi)q^{\fmaj(\pi)}=[(k+1)r]_{\zeta^h q}[(k+2)r]_{\zeta^h q}\cdots [nr]_{\zeta^h q}.
\end{equation}
For the case $\epsilon=-1$, we partition $G_{r,n}(W)$ into subsets $G_{r,n}(W')$, where $W'$ ranges over all words  obtained from $W$ by inserting $(n-k)^t$ for some $t\in [0,r-1]$. By Theorem \ref{thm:Main-III}(ii), we have
\[
\sum_{\pi\in G_{r,n}(W')} \chi_{-1,h}(\pi)q^{\fmaj(\pi)}=\chi_{-1,h}(W')q^{\fmaj(W')}[(k+2)r]_{(-1)^{k+1}\zeta^h q}\cdots [nr]_{(-1)^{n-1}\zeta^h q}.
\]
Moreover, let $T(W):=\bigcup_{t=0}^{r-1}T(W;(n-k)^t)$. We have
\begin{align*}
&\sum_{\pi\in G_{r,n}(W)} \chi_{-1,h}(\pi)q^{\fmaj(\pi)} \\
&\qquad=\sum_{W'\in T(W)}\left(\sum_{\pi\in G_{r,n}(W')} \chi_{-1,h}(\pi)q^{\fmaj(\pi)} \right) \\
&\qquad=\sum_{W'\in T(W)}\left(\chi_{-1,h}(W')q^{\fmaj(W')}[(k+2)r]_{(-1)^{k+1}\zeta^h q}\cdots [nr]_{(-1)^{n-1}\zeta^h q}\right) \\
&\qquad=\sum_{t=0}^{r-1}\left(\sum_{W'\in T(W; (n-k)^t)} \chi_{-1,h}(W')q^{\fmaj(W')}\right)[(k+2)r]_{(-1)^{k+1}\zeta^h q}\cdots [nr]_{(-1)^{n-1}\zeta^h q}.
\end{align*}
Hence by Lemma \ref{lem:(n-k)-insertion}, we have
\begin{equation} \label{eqn:case_epsilon=-1}
\sum_{\pi\in G_{r,n}(W)} \chi_{-1,h}(\pi)q^{\fmaj(\pi)}=\Big([k+1]_{-q^r}[r]_{\zeta^h q} \Big)[(k+2)r]_{(-1)^{k+1}\zeta^h q}\cdots [nr]_{(-1)^{n-1}\zeta^h q}.
\end{equation}
Combining (\ref{eqn:case_epsilon=1}) and (\ref{eqn:case_epsilon=-1}), for $\epsilon\in\{1,-1\}$ we have
\[
\sum_{\pi\in G_{r,n}(W)} \chi_{\epsilon,h}(\pi)q^{\fmaj(\pi)}=\Big([k+1]_{\epsilon q^r}[r]_{\zeta^h q} \Big)[(k+2)r]_{\epsilon^{k+1}\zeta^h q}\cdots [nr]_{\epsilon^{n-1}\zeta^h q}.
\]
Notice that $[k+1]_{\epsilon q^r}[r]_{\zeta^h q}=[(k+1)r]_{\zeta^h q}$ if $\epsilon =1$.
By Corollary \ref{cor:W-2}, for $1\le d\le\lfloor\frac{k}{2}\rfloor$ we have
\begin{equation} \label{eqn:W-2d}
\sum_{\pi\in G_{r,n}(W-2d)} \chi_{\epsilon,h}(\pi)q^{\fmaj(\pi)}=\sum_{\pi\in G_{r,n}(W-2d+2)} \chi_{\epsilon,h}(\pi)q^{\fmaj(\pi)}.
\end{equation} 
The proof of Theorem \ref{thm:Main-result-coset} is completed.
\qed

\section{Proof of Theorem \ref{thm:U+V}}
\medskip
In this section we study the relation between the signed Mahonian polynomials on the sets $G_{r,n}(b-k+1:b)$ and $G_{r,n}(b-k:b-1)$. In the following we compose permutations right to left.

\begin{pro} \label{pro:W-1} For $1\le k\le n-1$ and $k+1\le b\le n$, let $U$ and $V$ be the words  given by
\[
U=(b-k,b-k+1,\dots,b-1) \quad\mbox{ and}\quad V=(b-k+1,b-k+2,\dots,b).
\]
Then there is a bijection $\pi\mapsto\pi'$ of $G_{r,n}(U)\setminus G_{r,n}(V)$ onto $G_{r,n}(V)\setminus G_{r,n}(U)$ with $\Des(\pi')=\Des(\pi)$, $\col(\pi')=\col(\pi)$ and the following property.
\begin{itemize}
\item If $\pi$ contains the entry $b^t$ for some $t\in [1, r-1]$ then $\pi'$ contains the entry $(b-k)^t$ and $\inv(|\pi'|)-\inv(|\pi|)\equiv k\pmod 2$.
\item Otherwise, $\pi$ ($\pi'$, respectively) contains the entry $b$ ($b-k$, respectively) and $\inv(|\pi'|)-\inv(|\pi|)\equiv k-1\pmod 2$.
\end{itemize}
\end{pro}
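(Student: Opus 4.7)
The plan is to construct the bijection $\pi\mapsto\pi'$ explicitly by rearranging the entries occupying the $k+1$ positions of the absolute values in $\{b-k,\ldots,b\}$. Given $\pi\in G_{r,n}(U)\setminus G_{r,n}(V)$, let $p_0<\cdots<p_{k-1}$ be the positions of $(b-k)^0,\ldots,(b-1)^0$ in $\pi$ and let $q$ be the position of the entry of absolute value $b$, whose color is $t\in[0,r-1]$. Arrange these $k+1$ positions as $P_1<\cdots<P_{k+1}$ in positional order and define $j\in\{1,\ldots,k+1\}$ by $P_j=q$.

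In Case 1 ($t\neq 0$), define $\pi'$ by applying the cyclic shift of absolute values while keeping the color at each position unchanged: $(b-k+i)^0$ at $p_i$ becomes $(b-k+i+1)^0$ for $0\le i\le k-2$, $(b-1)^0$ at $p_{k-1}$ becomes $b^0$, and $b^t$ at $q$ becomes $(b-k)^t$. In Case 2 ($t=0$), the assumption $\pi\notin G_{r,n}(V)$ forces $q<p_{k-1}$, hence $j\le k$; define $\pi'$ by placing at $P_1,\ldots,P_{k+1}$ in positional order the color-zero entries $(b-k+1),(b-k+2),\ldots,(b-k+j),(b-k),(b-k+j+1),\ldots,b$. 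Entries of $\pi$ outside $\{P_1,\ldots,P_{k+1}\}$ are copied unchanged into $\pi'$.

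The verification proceeds in four steps. First, $\pi'\in G_{r,n}(V)\setminus G_{r,n}(U)$: the entries $(b-k+1)^0,\ldots,b^0$ occur in positional order on $k$ of the $k+1$ relevant positions and form a $V$-subsequence, while the image of $b-k$ has nonzero color (Case 1) or sits at $P_{j+1}>P_1$ strictly after $(b-k+1)^0$ at $P_1$ (Case 2), so $U$ is not a subsequence. Second, the analogous reverse rearrangement maps $\pi'\mapsto\pi$, so the map is a bijection. Third, $\col(\pi')=\col(\pi)$ since the color at each position is literally preserved. Fourth, $\Des(\pi')=\Des(\pi)$: for any adjacent pair involving a non-relevant position, the non-relevant entry's absolute value lies outside $[b-k,b]$ while the relevant entry has unchanged color, so the $F$-order comparison is stable; for a pair of consecutive relevant positions $(P_i,P_{i+1})$ a direct case analysis shows that the unique positional-pair descent (if any) sits at $(P_{j-1},P_j)$ in Case 1 or at $(P_j,P_{j+1})$ in Case 2, at the same location in both $\pi$ and $\pi'$.

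Finally, the inversion change is confined to pairs $(P_a,P_b)$ with $1\le a<b\le k+1$. In $\pi$, only the breaker $b$ at $P_j$ creates inversions, giving $k+1-j$ (with the smaller entries at $P_{j+1},\ldots,P_{k+1}$). In Case 1, the new breaker $b-k$ stays at $P_j$ and contributes $j-1$ inversions from the larger preceding entries, so $\inv(|\pi'|)-\inv(|\pi|)=(j-1)-(k+1-j)=2j-k-2\equiv k\pmod 2$. In Case 2, the breaker $b-k$ moves to $P_{j+1}$ and yields $j$ inversions, so the difference is $j-(k+1-j)=2j-k-1\equiv k-1\pmod 2$. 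The main technical subtlety is step four, particularly in Case 2, where the breaker migrates from $P_j$ in $\pi$ to $P_{j+1}$ in $\pi'$ while the descent still occurs at the positional pair $(P_j,P_{j+1})$ in both.
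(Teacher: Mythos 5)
Your construction is correct and is essentially the paper's own bijection: your positional recipe parametrized by $j$ reproduces exactly the case-by-case substitutions used in the paper (replacing each $b-k+i$ by $b-k+i+1$ and sending $b^t$ to $(b-k)^t$ in place when $t\neq 0$, and reinserting $b-k$ just after the old position of $b$ when $t=0$), and your inversion counts $2j-k-2$ and $2j-k-1$ agree with the paper's $\pm k$, $k-2j'$ and $1-k$, $k-2j'+1$. The verification of $\Des$, $\col$, and the membership/inverse-map claims is at the same level of detail as the paper's, so nothing further is needed.
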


\begin{proof} (i) Given a word $\pi\in G_{r,n}(U)\setminus G_{r,n}(V)$ with the entry $b^t$ for some $t\in [1,r-1]$, let
\[
\pi'=\left(\begin{array}{ccccc}
                        b-k & b-k+1  & \cdots & b-1 & b^t \\ 
                        b-k+1 & b-k+2  & \cdots & b & (b-k)^t
                       \end{array}
                 \right)\pi.
\]
Notice that if $b^t$ appears to the right (left, respectively) of $b-1$ ($b-k$, respectively) in $\pi$ then $\inv(|\pi'|)-\inv(|\pi|)=k$ ($-k$, respectively).
Moreover, if $b^t$ appears between $b-j-1$ and $b-j$ for some $j$ ($1\le j\le k-1$) then $\inv(|\pi'|)-\inv(|\pi|)=k-2j$. Hence $\inv(|\pi'|)$ and $\inv(|\pi|)$ have the same (opposite, respectively) parity  if $k$ is even (odd, respectively).

(ii) Note that $G_{r,n}(U)\cap G_{r,n}(V)=G_{r,n}(b-k:b)$. Given a word $\pi\in G_{r,n}(U)\setminus G_{r,n}(V)$ with the entry $b$, notice that $b$ appears to the left of $b-1$. There are two cases.

Case 1. If $b$ appears to the left of $b-k$ in $\pi$, then set
\[
\pi'=\left(\begin{array}{cccccc}
                        b     & b-k  & b-k+1 & b-k+2 &\cdots & b-1 \\ 
                        b-k+1 & b-k  & b-k+2 & b-k+3 &\cdots & b
           \end{array}
     \right)\pi.
\]
Note that $\inv(|\pi'|)-\inv(|\pi|)=1-k$.

Case 2. Otherwise, $b$ appears between $b-j-1$ and $b-j$ for some $j$ ($1\le j\le k-1$). If in particular $j=1$ then set
\[
\pi'=\left(\begin{array}{ccccc}
                        b-k   & \cdots  & b-2  &  b & b-1\\ 
                        b-k+1 & \cdots  & b-1  &  b & b-k
           \end{array}
     \right)\pi,
\]
otherwise $2\le j\le k-1$ and set
\[
\pi' = \left(\begin{array}{cccccccc}
                b-k   & \cdots & b-j-1 &  b   &  b-j & b-j+1  & \cdots &  b-1 \\ 
                b-k+1 & \cdots & b-j  & b-j+1 &  b-k   & 
                 b-j+2  & \cdots & b
                \end{array}
                \right)\pi.
\]
Note that in the former case $\inv(|\pi'|)-\inv(|\pi|)=k-1$, while in the latter case $\inv(|\pi'|)-\inv(|\pi|)=k-2j+1$. Hence $\inv(|\pi'|)$ and $\inv(|\pi|)$ have the same (opposite, respectively) parity if $k$ is odd (even, respectively).

The inverse map $\pi'\mapsto\pi$ can be constructed by composing $\pi'$ with the inverse of the permutation. The assertion follows.
\end{proof}

Now we prove Theorem \ref{thm:U+V}.

\medskip
\noindent
\emph{Proof of Theorem \ref{thm:U+V}.}
By Corollary \ref{cor:W-2},  it suffices to consider $U=(n-k,\dots,n-1)$ and $V=(n-k+1,\dots,n)$.
Notice that $G_{r,n}(U)\cap G_{r,n}(V)= G_{r,n}(n-k:n)$. 

(i) For $k$ odd,  by Proposition \ref{pro:W-1} we have
\begin{align*}
\sum_{W\in T(U;n)}\left(\sum_{\pi\in G_{r,n}(W)} \chi_{-1,h}(\pi)q^{\fmaj(\pi)} \right) &=\sum_{W\in T(V;n-k)}\left(\sum_{\pi\in G_{r,n}(W)} \chi_{-1,h}(\pi)q^{\fmaj(\pi)} \right), \\
\sum_{W\in T(U;n^t)}\left(\sum_{\pi\in G_{r,n}(W)} \chi_{-1,h}(\pi)q^{\fmaj(\pi)} \right) &= -\sum_{W\in T(V;(n-k)^t)}\left(\sum_{\pi\in G_{r,n}(W)} \chi_{-1,h}(\pi)q^{\fmaj(\pi)} \right),
\end{align*}
for each $t\in [1,r-1]$. Hence
\begin{equation} \label{eqn:2V}
\begin{aligned}
\sum_{\pi\in G_{r,n}(U)}\chi_{-1,h}(\pi)q^{\fmaj(\pi)} 
&+\sum_{\pi\in G_{r,n}(V)}\chi_{-1,h}(\pi)q^{\fmaj(\pi)} \\
&\qquad 
=2 \sum_{W\in T(V;n-k)}\left(\sum_{\pi\in G_{r,n}(W)} \chi_{-1,h}(\pi)q^{\fmaj(\pi)} \right).
\end{aligned}
\end{equation}

Case 1. $n$ is even. Then $n-k\equiv 1\pmod 2$. We have
\begin{equation} \label{eqn:4.2}
\begin{aligned}
&\sum_{W\in T(V;n-k)}\left(\sum_{\pi\in G_{r,n}(W)} \chi_{-1,h}(\pi)q^{\fmaj(\pi)} \right) \\
&\qquad =\sum_{W\in T(V;n-k)}\left(\chi_{-1,h}(W)q^{\fmaj(W)}[(k+2)r]_{(-1)^{k+1}\zeta^h q}\cdots[nr]_{(-1)^{n-1}\zeta^h q} \right).
\end{aligned}
\end{equation}
Moreover,
\begin{equation} \label{eqn:4.3}
\sum_{W\in T(V;n-k)} \chi_{-1,h}(W)q^{\fmaj(W)}=1-q^r+\cdots+(-1)^k q^r=[k+1]_{-q^r}.
\end{equation}
By (\ref{eqn:2V}), (\ref{eqn:4.2}) and (\ref{eqn:4.3}), the assertion (i) is proved for $n$ even.

Case 2. $n$ is odd. Then $n-k\equiv 0\pmod 2$. By Theorem \ref{thm:Main-III}, it suffices to consider the members in $G_{r,n}(V)$ containing $(n-k-1)^s$ and $n-k$ adjacently for each $s\in [0,r-1]$. Hence
\begin{equation} \label{eqn:4.4}
\begin{aligned}
&\sum_{W\in T(V;n-k)}\left(\sum_{\pi\in G_{r,n}(W)} \chi_{-1,h}(\pi)q^{\fmaj(\pi)} \right) \\
&\qquad =\sum_{s=0}^{r-1}\left(\sum_{W\in T(V;(n-k-1)^s,n-k)}\left(\sum_{\pi\in G_{r,n}(W)} \chi_{-1,h}(\pi)q^{\fmaj(\pi)} \right)\right) \\
&\qquad =\sum_{s=0}^{r-1}\left(\sum_{W\in T(V;(n-k-1)^s,n-k)}\left(\chi_{-1,h}(W)q^{\fmaj(W)}[(k+3)r]_{(-1)^{k+2}\zeta^h q}\cdots[nr]_{(-1)^{n-1}\zeta^h q}\right)\right). 
\end{aligned}
\end{equation}
Moreover,
\begin{equation} \label{eqn:4.5}
\begin{aligned}
&\sum_{s=0}^{r-1}\left(\sum_{W\in T(V;(n-k-1)^s,n-k)} \chi_{-1,h}(W)q^{\fmaj(W)} \right) \\
&\qquad =\left(\sum_{\tau\in T(V;n-k-1,n-k)} \chi_{-1,h}(\tau)q^{\fmaj(\tau)} \right)\Big(1+\zeta^h q+\cdots+\big(\zeta^h q\big)^{r-1}  \Big) \\
&\qquad =[k+1]_{-q^r}[k+2]_{q^r}[r]_{\zeta^h q}.
\end{aligned}
\end{equation}
By (\ref{eqn:2V}), (\ref{eqn:4.4}) and (\ref{eqn:4.5}), the assertion (i) is proved for $n$ odd.

(ii) For $k$ even,  by Proposition \ref{pro:W-1} we have
\begin{align*}
\sum_{W\in T(U;n)}\left(\sum_{\pi\in G_{r,n}(W)} \chi_{-1,h}(\pi)q^{\fmaj(\pi)} \right) &=-\sum_{W\in T(V;n-k)}\left(\sum_{\pi\in G_{r,n}(W)} \chi_{-1,h}(\pi)q^{\fmaj(\pi)} \right), \\
\sum_{W\in T(U;n^t)}\left(\sum_{\pi\in G_{r,n}(W)} \chi_{-1,h}(\pi)q^{\fmaj(\pi)} \right) &= \sum_{W\in T(V;(n-k)^t)}\left(\sum_{\pi\in G_{r,n}(W)} \chi_{-1,h}(\pi)q^{\fmaj(\pi)} \right),
\end{align*}
for each $t\in [1,r-1]$. Hence
\begin{equation} \label{eqn:2(V-U)}
\begin{aligned}
& \sum_{\pi\in G_{r,n}(U)\setminus G_{r,n}(V)} \chi_{-1,h}(\pi)q^{\fmaj(\pi)} +\sum_{\pi\in G_{r,n}(V)\setminus G_{r,n}(U)} \chi_{-1,h}(\pi)q^{\fmaj(\pi)}\\
&\qquad \qquad
=2 \sum_{t=1}^{r-1}\left(\sum_{W\in T(V;(n-k)^t)}\left(\sum_{\pi\in G_{r,n}(W)} \chi_{-1,h}(\pi)q^{\fmaj(\pi)} \right)\right).
\end{aligned}
\end{equation}

Case 1. $n$ odd. The assertion can be proved by the argument of Case 1 of the proof of (i), regarding the members of $G_{r,n}(V)$ containing the entry $(n-k)^t$ for each $t\in [1, r-1]$.

Case 2. $n$ even. It suffices to consider the members in $G_{r,n}(V)$ containing $(n-k-1)^s$ and $(n-k)^t$ adjacently for each $s\in [0,r-1]$ and $t\in [1,r-1]$. Hence
\begin{equation} \label{eqn:4.7}
\begin{aligned}
&\sum_{W\in T(V;(n-k)^t)}\left(\sum_{\pi\in G_{r,n}(W)} \chi_{-1,h}(\pi)q^{\fmaj(\pi)} \right) \\
&\qquad =\sum_{s=0}^{r-1}\left(\sum_{W\in T(V;(n-k-1)^s,(n-k)^t)}\left(\sum_{\pi\in G_{r,n}(W)} \chi_{-1,h}(\pi)q^{\fmaj(\pi)} \right)\right) \\
&\qquad =\sum_{s=0}^{r-1}\left(\sum_{W\in T(V;(n-k-1)^s,(n-k)^t)}\left(\chi_{-1,h}(W)q^{\fmaj(W)}[(k+3)r]_{(-1)^{k+2}\zeta^h q}\cdots[nr]_{(-1)^{n-1}\zeta^h q}\right)\right). 
\end{aligned}
\end{equation}

For each $s\in [0,r-1]$, there is a bijection $W\mapsto W'$ of $T(V;(n-k-1)^t,(n-k)^t)$ onto $T(V;(n-k-1)^s,(n-k)^t)$ such that
\begin{equation} \label{eqn:s-t}
 \chi_{-1,h}(W')q^{\fmaj(W')} =\left\{ \begin{array} {ll}
\chi_{-1,h}(W)q^{\fmaj(W)} \big(\zeta^h q\big)^{s-t} & \mbox{ if $s\ge t$;} \\
\chi_{-1,h}(W)q^{\fmaj(W)} \Big( -\big(\zeta^h q\big)^{s-t}\Big) & \mbox{ if $s<t$.}
\end{array}
\right.
\end{equation}
The bijection is established as follows.
Let $(n-k)^t$ and $(n-k-1)^t$ appear at the entries $W_j,W_{j+1}$ of $W$ for some $j$. The corresponding word $W'$ is obtained from $W$ according to the following rule. If $(W_j,W_{j+1})$ is an ascent, i.e., $(W_j,W_{j+1})=((n-k-1)^t,(n-k)^t)$, then replace the ordered pair by $((n-k-1)^s,(n-k)^t)$ if $s\ge t$ and by $((n-k)^t,(n-k-1)^s)$ if $s<t$. Moreover, if $(W_j,W_{j+1})$ is descent, i.e., $(W_j,W_{j+1})=((n-k)^t,(n-k-1)^t)$, then replace the ordered pair by $((n-t)^t,(n-k-1)^s)$ if $s\ge t$ and by $((n-k-1)^s,(n-k)^t)$ if $s<t$.

It follows that
\begin{equation} \label{eqn:4.9}
\begin{aligned}
&\sum_{s=0}^{r-1}\left(\sum_{W'\in T(V;(n-k-1)^s,(n-k)^t)} \chi_{-1,h}(W')q^{\fmaj(W')} \right) \\
&\qquad =\left(\sum_{W\in T(V;(n-k-1)^t,(n-k)^t)} \chi_{-1,h}(W)q^{\fmaj(W)} \right) \\
&\qquad\qquad\qquad\times \Big(1+\zeta^h q+\cdots+\big(\zeta^h q\big)^{r-t-1}-\big(\zeta^h q\big)^{-1}-\cdots-\big(\zeta^h q\big)^{-t}  \Big) \\
&\qquad =\left(\sum_{W\in T(V;n-k-1,n-k)} \chi_{-1,h}(W)q^{\fmaj(W)} \right) \big(\zeta^h q\big)^{2t} \\
&\qquad\qquad\qquad\times \Big(1+\zeta^h q+\cdots+\big(\zeta^h q\big)^{r-t-1}-\big(\zeta^h q\big)^{-1}-\cdots-\big(\zeta^h q\big)^{-t}  \Big) \\
&\qquad =[k+1]_{q^r}[k+2]_{-q^r}\big(\zeta^h q\big)^{t}\Big(-1-\zeta^h q-\cdots-\big(\zeta^h q\big)^{t-1}+\big(\zeta^h q\big)^t+\cdots+\big(\zeta^h q\big)^{r-1}  \Big). \\
\end{aligned}
\end{equation}
The assertion follows from (\ref{eqn:2(V-U)}), (\ref{eqn:4.7}) and (\ref{eqn:4.9}), 
where the factor $f(r;q)$ is given by
\[
f(r;q) :=\sum_{t=1}^{r-1} \big(\zeta^h q\big)^{t}\Big(-1-\zeta^h q-\cdots-\big(\zeta^h q\big)^{t-1}+\big(\zeta^h q\big)^t+\cdots+\big(\zeta^h q\big)^{r-1}  \Big).
\]
That $f(r;q)=-\big(\zeta^h q\big)[r-1]_{(-1)^r \zeta^h q}[r]_{(-1)^{r-1}\zeta^ q}$ can be proved by induction on $r$.
\qed

\medskip
Specifically, for $r=1$ and the sign character $\chi_{-1,0}(\sigma)=(-1)^{\inv(\sigma)}$, we derive the following results from Theorems \ref{thm:Main-result-coset} and \ref{thm:U+V}, which have been established in \cite[Theorem~1.2]{EFHLS_20}. We make use of the  notation $[n]_{\pm q}! := [1]_q[2]_{-q}\cdots [n]_{(-1)^{n-1}q}$.

\begin{thm} \label{thm:EFHLS-result} {\rm (Eu \emph{et al.} \cite{EFHLS_20})}  For $1\le k\le n-1$ and $k\le b\le n$, the following results hold.
\begin{enumerate}
\item If $k$ is odd then we have
\[
\sum_{\sigma\in S_n(b-k+1\,:\,b)} (-1)^{\inv(\sigma)}q^{\maj(\sigma)} =\dfrac{[n]_{\pm q}!}{[k]_{\pm q}!}.
\]
\item If $k$ is even and $n-b\equiv 0  \pmod{2}$ then
\[
\sum_{\sigma\in S_n(b-k+1\,:\,b)} (-1)^{\inv(\sigma)}q^{\maj(\sigma)} =\dfrac{[n]_{\pm q}!}{[k+1]_{\pm q}!} \cdot [k+1]_{(-1)^{n}q}.
\]
\item If $k$ is even and $n-b\equiv 1  \pmod{2}$ then
\[
\sum_{\sigma\in S_n(b-k+1\,:\,b)} (-1)^{\inv(\sigma)}q^{\maj(\sigma)} 
=  \dfrac{[n]_{\pm q}!}{[k+1]_{\pm q}!}\, \big(2-[k+1]_{(-1)^n q}\big). 
\]
\end{enumerate}
\end{thm}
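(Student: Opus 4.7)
The plan is to derive Theorem \ref{thm:EFHLS-result} as the $r=1$, $h=0$ specialization of Theorems \ref{thm:Main-result-coset} and \ref{thm:U+V}, both taken with $\epsilon=-1$. At $r=1$ we have $G_{1,n}=S_n$, $\col(\pi)=0$, $\chi_{-1,0}(\sigma)=(-1)^{\inv(\sigma)}$, $\fmaj=\maj$, and $\zeta^h=1$, so that $[kr]_{\zeta^h q}$ reduces to $[k]_q$. Two collapses will drive the argument:
\[
[r]_{\zeta^h q}=[1]_q=1\qquad\text{and}\qquad f(1;q)=-q\,[0]_{-q}\,[1]_q=0.
\]

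For part (1) with $b\equiv n\pmod 2$, I would apply Theorem \ref{thm:Main-result-coset} directly. Since $k$ is odd, the two parity subcases of $n-k$ coincide after using $[1]_q=1$, and the resulting product $[k+1]_{(-1)^k q}[k+2]_{(-1)^{k+1}q}\cdots[n]_{(-1)^{n-1}q}$ is exactly $[n]_{\pm q}!/[k]_{\pm q}!$. For $b\not\equiv n\pmod 2$, set $b'=b+1\equiv n$; then the target $S_n(b-k+1:b)$ equals $G_{1,n}(U)$ in the notation of Theorem \ref{thm:U+V} at the endpoint $b'$. Specializing Theorem \ref{thm:U+V}(i) to $r=1$ and using $[1]_q=1$, the right-hand side becomes $2\sum_{G_{1,n}(V)}$, so $\sum_{G_{1,n}(U)}=\sum_{G_{1,n}(V)}=[n]_{\pm q}!/[k]_{\pm q}!$.

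For part (2), $k$ even with $b\equiv n\pmod 2$, Theorem \ref{thm:Main-result-coset} applies directly; extracting the first factor $[k+1]_{(-1)^k q}=[k+1]_{(-1)^n q}$ rewrites the product as $\bigl([n]_{\pm q}!/[k+1]_{\pm q}!\bigr)\cdot[k+1]_{(-1)^n q}$, matching the claim. For part (3), $k$ even with $b\not\equiv n\pmod 2$, reindex $b'=b+1\equiv n$ and view the target as $G_{1,n}(U)$ in Theorem \ref{thm:U+V}(ii) at $b'$. Under $r=1$, the factor $[r]_{\zeta^h q}-1=0$ vanishes in the $n$-odd subcase and $f(1;q)=0$ vanishes in the $n$-even subcase, so the right-hand side is zero and
\[
\sum_{\pi\in G_{1,n}(U)}\chi_{-1,0}(\pi)q^{\maj(\pi)}+\sum_{\pi\in G_{1,n}(V)}\chi_{-1,0}(\pi)q^{\maj(\pi)}=2\sum_{\pi\in G_{1,n}(U)\cap G_{1,n}(V)}\chi_{-1,0}(\pi)q^{\maj(\pi)}.
\]
The intersection is $S_n(b'-k:b')$, a window of odd length $k+1$, covered by part (1) and giving $[n]_{\pm q}!/[k+1]_{\pm q}!$; the $V$-sum is given by part (2). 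Solving yields the factor $2-[k+1]_{(-1)^n q}$, as required.

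The computation is essentially bookkeeping; the only delicate step is matching the parity conventions (whether Theorem \ref{thm:U+V} is applied at $b$ or at $b'=b+1$) and checking that $[1]_q-1=0$ and $f(1;q)=0$ really do cause Theorem \ref{thm:U+V} to specialize into the clean doubling identity that lets part (3) be deduced from parts (1) and (2). No genuine obstacle arises beyond this parity bookkeeping.
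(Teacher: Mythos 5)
Your proposal is correct and is essentially the paper's own route: the paper simply states that Theorem \ref{thm:EFHLS-result} is obtained "for $r=1$ and the sign character $\chi_{-1,0}$" from Theorems \ref{thm:Main-result-coset} and \ref{thm:U+V}, and your specialization (using $[1]_q=1$, $f(1;q)=0$, and the reindexing $b'=b+1$ to reach the off-parity windows) is exactly the bookkeeping that derivation requires. The only detail worth flagging is the boundary case $k=n-1$ in part (3), where the intersection window has length $n$ and falls outside the stated range of part (1), but there the sum is trivially $1=[n]_{\pm q}!/[n]_{\pm q}!$.
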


\section{Proof of Theorem \ref{thm:Main-II}} In this section we shall study the signed Mahonian on the set $G_{r,n}(w):=\{\pi\in G_{r,n}: |\pi|=w\}$ for every $w\in S_n$, and prove Theorem \ref{thm:Main-II}.

We shall show that each $G_{r,n}(w)$ contains a unique member $\tilde{w}=(\tilde{w}_1,\dots,\tilde{w}_n)$ with the least flag major index, and $\fmaj(\tilde{w})=\maj(w)$. We make use of a hierarchy of $G_{r,n}(w)$, established by a sequence of nested subsets, 
\begin{equation} \label{eqn:hierarchical-structure}
G_{r,n}^{(0)}(w)\subset G_{r,n}^{(1)}(w)\subset \cdots\subset G_{r,n}^{(n)}(w)=G_{r,n}(w),
\end{equation}
where $G_{r,n}^{(0)}(w):=\{\tilde{w}\}$ and for $k=1,2,\dots,n$,
\[
G_{r,n}^{(k)}(w):=\{(x_1,\dots,x_k,\tilde{w}_{k+1},\dots,\tilde{w}_{n}) : x_j\in\{w_j,w_j^1,\dots, w_j^{r-1}\}, j=1,\dots,k\}.
\] 
We observe that the structure (\ref{eqn:hierarchical-structure}) realizes the following factorization.

\begin{thm}  \label{thm:Main-I} For any $w\in S_n$, we have
\[
\sum_{\pi\in G_{r,n}(w)} \chi_{\epsilon,h}(\pi)q^{\fmaj(\pi)} = \epsilon^{\inv(w)}(\zeta^h q)^{\maj(w)}\cdot [r]_{\zeta^h q}[r]_{(\zeta^h q)^2}\cdots [r]_{(\zeta^h q)^n}.
\]
\end{thm}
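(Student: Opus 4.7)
The plan is to realize the hierarchical factorization in (\ref{eqn:hierarchical-structure}) via an explicit cyclic-shift bijection. First, I would identify $\tilde{w}$ explicitly: writing $\tilde{w} = (w_1^{\tilde{z}_1}, \ldots, w_n^{\tilde{z}_n})$, set $\tilde{z}_i := D_i \bmod r$, where $D_i := \#\{j \in [i, n-1] : w_j > w_{j+1}\}$. A direct check against (\ref{eqn:F-linear-order}) shows that $\tilde{w}$ has a descent at position $i$ exactly when $i \in \Des(w)$ and $r \mid D_i$. Using $\sum_i D_i = \maj(w)$ together with the decomposition $D_i = r \lfloor D_i/r \rfloor + (D_i \bmod r)$, verifying $\fmaj(\tilde{w}) = \maj(w)$ reduces to the identity $\sum_{i \in \Des(w),\, r \mid D_i} i = \sum_i \lfloor D_i/r \rfloor$; both sums equal $\sum_{k \geq 1} i_k$ where $i_k$ is the unique index with $D_{i_k} = kr$ (guaranteed because $D_i$ is non-increasing with unit descents at $\Des(w)$). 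This establishes the base case $\sum_{\pi \in G_{r,n}^{(0)}(w)} Q^{\fmaj(\pi)} = Q^{\maj(w)}$, where $Q := \zeta^h q$.

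Second, for each $k \in [1,n]$ and $t \in [0, r-1]$ I would introduce the cyclic-shift map $\phi_t : G_{r,n}^{(k-1)}(w) \to G_{r,n}^{(k)}(w)$ sending $(w_1^{z_1}, \ldots, w_{k-1}^{z_{k-1}}, \tilde{w}_k, \tilde{w}_{k+1}, \ldots, \tilde{w}_n)$ to $(w_1^{(z_1+t) \bmod r}, \ldots, w_{k-1}^{(z_{k-1}+t) \bmod r}, w_k^{(\tilde{z}_k+t) \bmod r}, \tilde{w}_{k+1}, \ldots, \tilde{w}_n)$. The assignment $(t, \pi) \mapsto \phi_t(\pi)$ is manifestly a bijection $[0, r-1] \times G_{r,n}^{(k-1)}(w) \to G_{r,n}^{(k)}(w)$ (recover $t$ from the color at position $k$). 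The key claim to prove is
\[
\fmaj(\phi_t(\pi)) - \fmaj(\pi) = tk \qquad \text{for all } \pi \in G_{r,n}^{(k-1)}(w) \text{ and } t \in [0, r-1].
\]

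To prove this, set $n_i = 1$ if $z_i + t \geq r$ and $n_i = 0$ otherwise for $i \in [1, k]$ (with $z_k := \tilde{z}_k$), $n_{k+1} := 0$; let $W := n_1 + \cdots + n_k$. Then immediately $\col(\phi_t(\pi)) - \col(\pi) = tk - rW$. For the change in $\maj_F$, note that the descent condition at $i$ is $z_i < z_{i+1}$ if $i \notin \Des(w)$ and $z_i \leq z_{i+1}$ if $i \in \Des(w)$; at interior positions $i < k$ both colors shift by $t$, while at the boundary $i = k$ only $z_k$ shifts and the pair $\tilde{z}_k, \tilde{z}_{k+1}$ satisfies the specific relation $\tilde{z}_k \equiv \tilde{z}_{k+1} + \mathbf{1}_{k \in \Des(w)} \pmod{r}$ dictated by the construction of $\tilde{w}$. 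A short case analysis on $(n_i, n_{i+1})$ then shows uniformly that the descent at $i \in [1, k]$ flips $0 \to 1$ when $(n_i, n_{i+1}) = (1, 0)$, flips $1 \to 0$ when $(n_i, n_{i+1}) = (0, 1)$, and remains unchanged when $n_i = n_{i+1}$. Decomposing $(n_1, \ldots, n_k, 0)$ into maximal $1$-runs $[a, b]$ and telescoping the signed endpoint contributions $-(a-1)$ (absent when $a = 1$) and $+b$, each $1$-run contributes exactly its length to $\maj_F$; hence $\maj_F$ increases by $W$, and $\fmaj$ changes by $(tk - rW) + rW = tk$, as claimed.

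Summing the key identity over $t$ yields $\sum_{G_{r,n}^{(k)}(w)} Q^{\fmaj} = [r]_{Q^k} \sum_{G_{r,n}^{(k-1)}(w)} Q^{\fmaj}$, whence induction on $k$ gives $\sum_{\pi \in G_{r,n}(w)} Q^{\fmaj(\pi)} = Q^{\maj(w)} \prod_{k=1}^n [r]_{Q^k}$. Since $\inv(|\pi|) = \inv(w)$ throughout $G_{r,n}(w)$, and since $\zeta^r = 1$ yields $\zeta^{h\col(\pi)} q^{\fmaj(\pi)} = Q^{\fmaj(\pi)}$, the factor $\epsilon^{\inv(w)}$ factors out of the character sum and produces the stated formula. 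The main obstacle is the key identity, specifically the exact cancellation between the $-rW$ deficit in $\col$ (from modular wrap-around) and the compensating $+rW$ gain in $r \cdot \maj_F$ (from induced descent flips read off from the $1$-runs of the wrap pattern); the subtlety at the boundary $i = k$ is absorbed only thanks to the particular form of $\tilde{z}_k$ relative to $\tilde{z}_{k+1}$.
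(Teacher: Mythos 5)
Your proposal is correct and follows essentially the same route as the paper: the same distinguished minimal element $\tilde w$ (your formula $\tilde z_i = D_i \bmod r$ is exactly what the paper's Algorithm A produces), the same nested filtration $G_{r,n}^{(0)}(w)\subset\cdots\subset G_{r,n}^{(n)}(w)$, and the same prefix color-shift maps with the key increment $\fmaj(\varphi(\pi;k,t))=\fmaj(\pi)+kt$ of Lemma \ref{lem:ab_fmaj}. Your bookkeeping via the wrap-around indicators $n_i$ and their maximal $1$-runs is just a cleaner rendering of the paper's ``alternating sections'' computation, and absorbing the character into $Q=\zeta^h q$ using $\fmaj(\pi)\equiv\col(\pi)\pmod r$ is a small shortcut rather than a different method.
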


Given a $w\in S_n$, there is a member $\tilde{w}$ of $G_{r,n}(w)$ with $\fmaj(\tilde{w})=\maj(w)$, which can be determined by the following procedure. That $\tilde{w}$ is the unique member of $G_{r,n}(w)$ with the least flag major index follows from Lemma \ref{lem:ab_fmaj}(i). 

\medskip
\noindent
{\bf Algorithm A.} 
\begin{enumerate}
\item If $w$ contains no descent then $w=(1,2,\dots,n)$ and let $\tilde{w}=(w,(0,\dots,0))$. 
\item Otherwise, with respect to the descents of $w$, we decompose $w$ into $\des(w)+1$ increasing runs, indexed by $0,1,\dots,\des(w)$ from right to left. Then $\tilde{w}$ is obtained from $w$ by assigning a color $t\in [0,r-1]$ to each entry in the run indexed by $j$ if $j\equiv t\pmod r$ for $j=0,1,\dots,\des(w)$.
\end{enumerate}

For example, let $w=(8,9,7,1,6,2,4,3,5)\in S_9$. Note that $w$ has five increasing runs $(8,9)$, $(7)$, $(1,6)$, $(2,4)$ and $(3,5)$, and $\maj(w)=17$. For $r=3$, we have $\tilde{w}=(8^1$, $9^1$, $7$, $1^2$, $6^2$, $2^1$, $4^1$, $3$, $5)$ and $\fmaj(\tilde{w})=17$.  For $r=2$, we have $\tilde{w}=(8$, $9$, $7^{1}$, $1$, $6$, $2^{1}$, $4^{1}$, $3$, $5)$ and $\fmaj(\tilde{w})=17$. We have the following observation.

\begin{lem} \label{lem:tilde_omega_color} For any $w\in S_n$, we have
\begin{enumerate}
\item $\fmaj(\tilde{w})=\maj(w)$;
\item $\chi_{\epsilon,h}(\tilde{w}) = \epsilon^{\inv(w)}\zeta^{h\cdot\maj(w)}$.
\end{enumerate}
\end{lem}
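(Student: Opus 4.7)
The plan is to deduce both parts from a direct analysis of how Algorithm~A colors the entries of $w$, together with a short algebraic identity relating $\maj(w)$ to the constituents of $\fmaj(\tilde{w})$. First I would set $s := \des(w)$ and let $\ell_0, \ell_1, \ldots, \ell_s$ be the lengths of the increasing runs of $w$ indexed from right to left as in Algorithm~A. The descent of $w$ between run $k+1$ and run $k$ lies at position $P_k := \sum_{i=k+1}^{s} \ell_i$, and so
\[
\maj(w) \;=\; \sum_{k=0}^{s-1} P_k \;=\; \sum_{i=1}^{s} i \cdot \ell_i,
\]
since each $\ell_i$ contributes once for every $k \in \{0, 1, \ldots, i-1\}$. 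Algorithm~A assigns color $k \bmod r$ to every entry of run $k$, so
\[
\col(\tilde{w}) \;=\; \sum_{k=0}^{s} \ell_k \cdot (k \bmod r).
\]

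Next I would locate the descents of $\tilde{w}$ in the order~(\ref{eqn:F-linear-order}). Within a run the entries share a color and are increasing in absolute value, so no descent arises inside a run. At the boundary between run $k+1$ (on the left, color $(k+1) \bmod r$) and run $k$ (on the right, color $k \bmod r$), recall that under~(\ref{eqn:F-linear-order}) an entry with a \emph{larger} color is \emph{smaller}; hence the boundary is an ascent in $\tilde{w}$ exactly when $(k+1) \bmod r > k \bmod r$, and is a descent precisely when $(k+1) \equiv 0 \pmod{r}$. Collecting the positions of these surviving descents yields
\[
\maj_F(\tilde{w}) \;=\; \sum_{j=1}^{\lfloor s/r \rfloor} P_{jr-1} \;=\; \sum_{i=1}^{s} \lfloor i/r \rfloor \cdot \ell_i.
\]

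Combining the two formulas with the identity $i = r\lfloor i/r \rfloor + (i \bmod r)$ gives
\[
r \cdot \maj_F(\tilde{w}) + \col(\tilde{w}) \;=\; \sum_{i=1}^{s} \ell_i \bigl( r\lfloor i/r \rfloor + (i \bmod r) \bigr) \;=\; \sum_{i=1}^{s} i \cdot \ell_i \;=\; \maj(w),
\]
where the $k = 0$ term of $\col(\tilde{w})$ vanishes. By the definition $\fmaj(\tilde{w}) = r \cdot \maj_F(\tilde{w}) + \col(\tilde{w})$, this is precisely~(i). For~(ii) I would observe that $|\tilde{w}| = w$, so $\inv(|\tilde{w}|) = \inv(w)$; and~(i) together with $\zeta^{r} = 1$ forces $\zeta^{h \cdot \col(\tilde{w})} = \zeta^{h \cdot \maj(w)}$, so~(\ref{eqn:one-dimension}) yields $\chi_{\epsilon,h}(\tilde{w}) = \epsilon^{\inv(w)} \zeta^{h \cdot \maj(w)}$.

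The only delicate point is the descent analysis at the run boundaries under the unusual order~(\ref{eqn:F-linear-order}); once one checks that the surviving descents are exactly those whose left-hand run has index divisible by $r$, both parts collapse to the single algebraic identity displayed above.
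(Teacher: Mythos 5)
Your proof is correct and takes essentially the same approach as the paper: both arguments identify the descents of $\tilde{w}$ as exactly the run boundaries whose left run has index divisible by $r$ (in the right-to-left indexing of Algorithm~A), express $\maj(w)$, $\maj_F(\tilde{w})$ and $\col(\tilde{w})$ as sums over run lengths, and combine them, with part~(ii) following from $|\tilde{w}|=w$ and $\col(\tilde{w})\equiv\maj(w)\pmod r$. Your bookkeeping via the identity $i=r\lfloor i/r\rfloor+(i\bmod r)$ is a slightly cleaner packaging of the same computation the paper carries out with the partial sums $b_i$.
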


\begin{proof} (i) For $1\le i\le \des(w)+1$, let $a_i$ be the number of entries of the $i$th increasing run of $w$ from left to right, and let $b_i=a_1+\cdots+a_i$. Suppose $\des(w)+1=dr+t$ for some $d$ and $t$, $0\le t\le r-1$. Notice that
\begin{equation} \label{eqn:maj(w)--maj_F(w)}
\begin{aligned} 
\maj(w) &= b_1+b_2+\cdots+b_{\des(w)}\\
\maj_F(\tilde{w}) &= b_t+b_{r+t}+\cdots+b_{(d-1)r+t}.
\end{aligned}
\end{equation}
We observe that the color weight of $\tilde{w}$ is
\begin{equation} \label{eqn:col(w)}
\begin{aligned}
\col(\tilde{w}) &= (t-1)a_1+(t-2)a_2+\cdots+a_{t-1} \\
&\qquad+\sum_{i=1}^d \big( (r-1)a_{(i-1)r+t+1}+(r-2)a_{(i-1)r+t+2}+\dots+a_{(i-1)r+t+r-1}\big) \\
&=b_1+\cdots+b_{t-1}+\sum_{i=1}^d \left( \big(b_{(i-1)r+t+1}-b_{(i-1)r+t}\big)+\cdots+\big(b_{(i-1)r+t+r-1}-b_{(i-1)r+t}\big)\right) \\
&=b_1+\cdots+b_{t-1}+\sum_{i=1}^d \left( \big(b_{(i-1)r+t+1}+\cdots+b_{(i-1)r+t+r-1}\big)-(r-1)b_{(i-1)r+t}\right). \\
\end{aligned}
\end{equation}
By (\ref{eqn:maj(w)--maj_F(w)}) and (\ref{eqn:col(w)}), we have
\[
\fmaj(\tilde{w})=r\cdot\maj_F(\tilde{w})+\col(\tilde{w})=b_1+b_2+\cdots+b_{dr+t-1}=\maj(w).
\]

(ii)  By the assertion (i), we have $\maj(w)=\fmaj(\tilde{w})=r\cdot\maj_F(\tilde{w})+\col(\tilde{w})$, and thus $\maj(w)\equiv\col(\tilde{w})\pmod r$.  Note that $|\tilde{w}|=w$. Hence
\[
\chi_{\epsilon,h}(\tilde{w})=\epsilon^{\inv(|\tilde{w}|)}\zeta^{h\cdot\col(\tilde{w})}=\epsilon^{\inv(w)}\zeta^{h\cdot\maj(w)}.
\]
The results follow.
\end{proof}

Consider the sequence of nested subsets of $G_{r,n}(w)$ in (\ref{eqn:hierarchical-structure}). We define maps $\pi\mapsto\varphi(\pi;k,t)$ from $G_{r,n}^{(k-1)}$ to $G_{r,n}^{(k)}$ by a color increment of $t$ on each entry of the prefix of length $k$ of $\pi$.

\begin{defi} \label{def:varphi} {\rm
For $1\le k\le n$, $t\in [0, r-1]$ and any $\pi=(\sigma,(z_1,\dots,z_n))\in G_{r,n}^{(k-1)}$, let 
\[
\varphi(\pi;k,t):=(\sigma,(z_1+t,\dots,z_k+t,z_{k+1},\dots,z_n)).
\]
}
\end{defi}

For example, if $r=3$ and $\pi=(4^{2},2,5^{1},1,3)$, we have $\varphi(\pi;3,2)=(4^{1},2^{2},5,1,3)$. Note that $G_{r,n}^{(k)}(w)=\{\varphi(\pi;k,t):\pi\in G_{r,n}^{(k-1)}(w)$ and $t=0,1,\dots,r-1\}$. We observe the following connection between $G_{r,n}^{(k-1)}(w)$ and $G_{r,n}^{(k)}(w)$.

\begin{lem}\label{lem:ab_fmaj} For $1\le k\le n$, $t\in [0,r-1]$ and any word $\pi\in G_{r,n}^{(k-1)}(w)$, we have 
\begin{enumerate}
\item $\fmaj(\varphi(\pi;k,t)) = \fmaj(\pi) + k\cdot t$.
\item $\chi_{\epsilon,h}(\varphi(\pi;k,t)) = \chi_{\epsilon,h}(\pi)\zeta^{h\cdot k\cdot t}$.
\end{enumerate}
\end{lem}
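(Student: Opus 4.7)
The plan is to analyze directly how the shift $\varphi(\pi;k,t)$ affects each ingredient of $\chi_{\epsilon,h}$ and $\fmaj$, exploiting the fact that $\varphi$ leaves absolute values untouched. I write $\pi'=\varphi(\pi;k,t)$ and, for $i\in[1,k]$, let $b_i:=[z_i+t\ge r]$ indicate whether position $i$ wraps around in its color; set $s:=\sum_{i=1}^{k}b_i$.

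Part (ii) is immediate: $|\pi'|=|\pi|$, so $\inv(|\pi'|)=\inv(|\pi|)$, and working additively with colors gives $\col(\pi')\equiv\col(\pi)+kt\pmod r$, so since $\zeta^r=1$ the definition (\ref{eqn:one-dimension}) yields $\chi_{\epsilon,h}(\pi')=\chi_{\epsilon,h}(\pi)\,\zeta^{hkt}$ at once. For part (i), direct summation gives $\col(\pi')-\col(\pi)=kt-rs$, and since $\fmaj=r\cdot\maj_F+\col$, it suffices to prove $\maj_F(\pi')-\maj_F(\pi)=s$, whence $\fmaj(\pi')-\fmaj(\pi)=rs+(kt-rs)=kt$, as desired.

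To establish the $\maj_F$ identity I track, for each $i\in[1,n-1]$, the descent indicator $\delta_i:=[\pi_i>\pi_{i+1}]$ with respect to (\ref{eqn:F-linear-order}). For $i>k$ nothing changes, so $\delta_i'=\delta_i$. For $i<k$, both $z_i$ and $z_{i+1}$ shift by $t$; a routine check shows that the relative $F$-order of $\pi_i,\pi_{i+1}$ is preserved when both positions wrap or neither does, and reversed when exactly one wraps, giving $\delta_i'-\delta_i=b_i-b_{i+1}$. The subtle case is $i=k$ (with $k<n$), where only $z_k$ shifts. Here the hypothesis $\pi\in G_{r,n}^{(k-1)}(w)$ is essential, since it pins down $z_k=\tilde{z}_k$ and $z_{k+1}=\tilde{z}_{k+1}$; by Algorithm~A, $(\tilde{z}_k,\tilde{z}_{k+1})$ together with the sign of $|\pi_k|-|\pi_{k+1}|$ falls into one of two rigid patterns: either (a) $w$ has no descent at $k$, so $\tilde{z}_k=\tilde{z}_{k+1}$ and $|\pi_k|<|\pi_{k+1}|$, or (b) $w$ has a descent at $k$, so $\tilde{z}_{k+1}\equiv\tilde{z}_k-1\pmod r$ and $|\pi_k|>|\pi_{k+1}|$. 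A case analysis (splitting (b) further by whether $\tilde{z}_k=0$) then yields $\delta_k'-\delta_k=b_k$; when $k=n$ this boundary term is absent, but the rightmost increasing run of $w$ is indexed $0$ in Algorithm~A, so $\tilde{z}_n=0$ and $b_n=0$, and no correction is needed. Summing and telescoping,
\[
\maj_F(\pi')-\maj_F(\pi)=\sum_{i=1}^{k-1}i(b_i-b_{i+1})+k\,b_k=\sum_{i=1}^{k}b_i=s,
\]
which completes the proof. The main obstacle is precisely the boundary analysis at $i=k$: a small example outside $G_{r,n}^{(k-1)}(w)$ already shows that the identity $\delta_k'-\delta_k=b_k$ can fail without the Algorithm~A constraints on $\tilde{z}_k,\tilde{z}_{k+1}$, so the hypothesis on $\pi$ really is used here.
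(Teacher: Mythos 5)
Your proof is correct and follows essentially the same route as the paper's: both arguments reduce the $\fmaj$ increment to tracking how the color shift flips descents in the order (\ref{eqn:F-linear-order}) according to whether each position's color wraps modulo $r$, and both use the Algorithm~A structure of $\tilde{w}_k,\tilde{w}_{k+1}$ to settle the boundary position $i=k$. Your per-position indicators $b_i$ with the telescoping sum $\sum_{i=1}^{k-1} i(b_i-b_{i+1})+k\,b_k=\sum_{i=1}^{k}b_i$ are just a cleaner bookkeeping of the paper's partition of the prefix into maximal wrapping/non-wrapping blocks of lengths $a_0,\dots,a_{2d}$.
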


\begin{proof} (i) If $t=0$ then $\varphi(\pi;k,t)=\pi$ and the assertion holds. For a fixed $t\ge 1$ and any $\pi=(\pi_1,\dots,\pi_n)=(\sigma, (z_1,\dots,z_n))\in G^{(k-1)}_{r,n}(w)$, note that $\pi_j=\tilde{w}_j$ for $k\le j\le n$. For $1\le i\le k-1$,  we observe that in the order (\ref{eqn:F-linear-order}) if $z_i,z_{i+1}\in [0,r-t-1]$ or $z_i,z_{i+1}\in [r-t,r-1]$ then the position $i$ is a descent of both (neither, respectively) of $\pi$ and $\varphi(\pi;k,t)$; otherwise the position $i$ is a descent of either $\pi$ or $\varphi(\pi;k,t)$. We partition the prefix $\pi_1,\dots,\pi_k$ of $\pi$ into alternating sections of maximal sequences of consecutive entries with a color in $[0,r-t-1]$ and consecutive entries with a color in $[r-t,r-1]$. Let $a_0,a_1,\dots,a_{2d}$ be the lengths of these sections (from left to right), where either $d=0$, or $a_0\ge 0$ and $a_1,\dots,a_{2d}>0$ for some $d\ge 1$. Note that  $a_0+a_1+\cdots+a_{2d}=k$. For $0\le j\le 2d$, let $b_j=a_0+a_1+\cdots+a_j$. Consider the following cases of the prefix $\pi_1,\dots,\pi_k$.

Case 1. $z_k\in [0,r-t-1]$. Then the colors of $\pi_1,\dots,\pi_k$ are of the form
\[ \pi_1,\dots,\pi_k:\,\,
\underbrace{[0,r-t-1]}_{a_0},\underbrace{[r-t,r-1]}_{a_1},\underbrace{[0,r-t-1]}_{a_2},\dots,\underbrace{[r-t,r-1]}_{a_{2d-1}}\underbrace{[0,r-t-1]}_{a_{2d}}.
\]
If the position $k$ is a descent of $\pi$, i.e., $\pi_k=\tilde{w}_k>\tilde{w}_{k+1}=\pi_{k+1}$, then by the construction of $\tilde{w}$ in Algorithm A, we have $z_k=0$ and $z_{k+1}=r-1$. It follows that the position $k$ is also a descent of $\varphi(\pi;k,t)$. Otherwise, the position $k$ is not a descent of $\pi$, i.e., either $k=n$ or $\pi_k<\pi_{k+1}$. In the case $k<n$, we observe that either $\sigma_k<\sigma_{k+1}$ and $z_i=z_{k+1}$, or $\sigma_k>\sigma_{k+1}$ and $z_k=z_{k+1}+1$. Hence the position $k$ is not a descent of $\varphi(\pi;k,t)$ either. We have
\begin{align*}
\fmaj(\varphi(\pi;k,t))-\fmaj(\pi) 
&= \left(r\cdot \sum_{j=1}^d (a_0+a_1+\cdots+a_{2j-1}) +t\cdot\sum_{j=0}^d a_{2j}-(r-t)\cdot\sum_{j=1}^d a_{2j-1} \right)\\
&\qquad -r\cdot\sum_{j=0}^{d-1} (a_0+a_1+\cdots+a_{2j}) \\
&=t(a_0+a_1+\cdots+a_{2d}) \\
&=k\cdot t.
\end{align*}

Case 2.  $z_k\in [r-t,r-1]$. Then the colors of $\pi_1,\dots,\pi_k$ are of the form
\[ \pi_1,\dots,\pi_k:\,\,
\underbrace{[r-t,r-1]}_{a_0},\underbrace{[0,r-t-1]}_{a_1},\underbrace{[r-t,r-1]}_{a_2},\dots,\underbrace{[0,r-t-1]}_{a_{2d-1}}\underbrace{[r-t,r-1]}_{a_{2d}}.
\]
Note that $k<n$ and that either $\sigma_k<\sigma_{k+1}$ and $z_k=z_{k+1}$, or $\sigma_k>\sigma_{k+1}$ and $z_k=z_{k+1}+1$. Hence the position $k$ is not a descent of $\pi$. Since $z_k\in [r-t,r-1]$, $z_k+t\equiv z_k+t-r\pmod r$ and $z_k+t-r<z_k$. Hence the position $k$ is always a descent of $\varphi(\pi;k,t)$. We have
\begin{align*}
\fmaj(\varphi(\pi;k,t))-\fmaj(\pi) 
&= \left(r\cdot \sum_{j=1}^d (a_0+a_1+\cdots+a_{2j}) +t\cdot\sum_{j=1}^d a_{2j-1}-(r-t)\cdot\sum_{j=0}^d a_{2j} \right)\\
&\qquad -r\cdot\sum_{j=1}^{d-1} (a_0+a_1+\cdots+a_{2j-1}) \\
&=t(a_0+a_1+\cdots+a_{2d}) \\
&=k\cdot t.
\end{align*}
The assertion (i) follows.

(ii) Let $\pi'=\varphi(\pi;k,t)$. Note that $|\pi'|=|\pi|$ and $\col(\pi')\equiv\col(\pi)+k\cdot t\pmod r$. Hence
\[
\chi_{\epsilon,h}(\varphi(\pi;k,t)) = \epsilon^{\inv(|\pi|)}\zeta^{h(\col(\pi)+k\cdot t)} = \chi_{\epsilon,h}(\pi)\zeta^{h\cdot k\cdot t}.
\]
The assertion (ii) follows.
\end{proof}

\smallskip
Now, we prove Theorem \ref{thm:Main-I}.

\smallskip
\noindent
\emph{Proof of Theorem \ref{thm:Main-I}.} For $0\le k\le n$, we shall prove 
\begin{equation} \label{eqn:induction_on_k}
\sum_{\pi\in G_{r,n}^{(k)}(w)} \chi_{\epsilon,h}(\pi)q^{\fmaj(\pi)}=\epsilon^{\inv(w)}(\zeta^h  q)^{\maj(w)}\cdot
[r]_{\zeta^h q}[r]_{(\zeta^h q)^2}\cdots [r]_{(\zeta^h q)^k}
\end{equation}
by induction on $k$. By Lemma \ref{lem:tilde_omega_color}, for $k=0$ we have 
\begin{equation} \label{eqn:(zeta^kq)^maj}
\sum_{\pi\in G_{r,n}^{(0)}(w)} \chi_{\epsilon,h}(\pi)q^{\fmaj(\pi)}=\chi_{\epsilon,h}(\tilde{w})q^{\fmaj(\tilde{w})}=\epsilon^{\inv(w)}(\zeta^h  q)^{\maj(w)}.
\end{equation}
Moreover, by Lemma \ref{lem:ab_fmaj}, for $k\ge 1$ we have
\begin{align*}
\sum_{\pi\in G_{r,n}^{(k)}(w)}\chi_{\epsilon,h}(\pi)q^{\fmaj(\pi)} &= \sum_{\pi\in G_{r,n}^{(k-1)}(w)} \left( \sum_{j=0}^{r-1}\chi_{\epsilon,h}(\varphi(\pi;k,j))q^{\fmaj(\varphi(\pi;k,j))} \right) \\
&= \sum_{\pi\in G_{r,n}^{(k-1)}(w)} \left( \sum_{j=0}^{r-1}\chi_{\epsilon,h}(\pi) \zeta^{h\cdot k\cdot j}q^{\fmaj(\pi)+k\cdot j} \right) \\
&= \left(1+ \big(\zeta^h q\big)^{k}+\cdots+ \big(\zeta^h q\big)^{(r-1)k}\right) \sum_{\pi\in G_{r,n}^{(k-1)}(w)} \chi_{\epsilon,h}(\pi) q^{\fmaj(\pi)} \\
&=[r]_{(\zeta^h q)^k} \sum_{\pi\in G_{r,n}^{(k-1)}(w)} \chi_{\epsilon,h}(\pi) q^{\fmaj(\pi)}.
\end{align*}
By induction hypothesis, (\ref{eqn:induction_on_k}) follows. The proof of Theorem \ref{thm:Main-I} is completed.
\qed

\medskip
Using (\ref{eqn:Gessel-Simion}) and Theorems \ref{thm:EFHLS-result} and \ref{thm:Main-I}, we now prove Theorem \ref{thm:Main-II}. 

\medskip
\noindent
\emph{Proof of Theorem \ref{thm:Main-II}.}
By Theorem \ref{thm:Main-I}, we have
\begin{equation} \label{eqn:(q;-q)product}
\begin{aligned}
&\sum_{\pi\in H_{r,n}(b-k+1:b)}  \chi_{-1,h}(\pi)q^{\fmaj(\pi)} \\
&\qquad = \sum_{w\in S_n(b-k+1:b)} \left( \sum_{\pi\in G_{r,n}(w)}  \chi_{-1,k}(\pi)q^{\fmaj(\pi)} \right) \\
&\qquad= \left( \sum_{w\in S_n(b-k+1:b)} (-1)^{\inv(w)}\big(\zeta^h q\big)^{\maj(w)}\right)[r]_{\zeta^h q}[r]_{(\zeta^h q)^2}\cdots [r]_{(\zeta^h q)^n}. 
\end{aligned}
\end{equation}
By (\ref{eqn:(q;-q)product}) and Theorem \ref{thm:EFHLS-result}, we have the following results.

(i) If $k$ is odd then 
\begin{align*}
\sum_{\pi\in H_{r,n}(b-k+1:b)}  \chi_{-1,k}(\pi)q^{\fmaj(\pi)}
&=\dfrac{[n]_{\pm \zeta^h q}!}{[k]_{\pm \zeta^h q}!} \cdot [r]_{\zeta^h q}[r]_{(\zeta^h q)^2}\cdots [r]_{(\zeta^h q)^n} \\
&=\frac{[r]_{\zeta^h q}[2r]_{-\zeta^h q}\cdots [nr]_{(-1)^{n-1}\zeta^h q}}{[1]_{\zeta^h q}[2]_{-\zeta^h q}\cdots [k]_{(-1)^{k-1}\zeta^h q}}. 
\end{align*}

(ii) If $k$ is even and $n-b\equiv 0  \pmod{2}$ then
\begin{align*}
\sum_{\pi\in H_{r,n}(b-k+1:b)}  \chi_{-1,k}(\pi)q^{\fmaj(\pi)}
&=\dfrac{[n]_{\pm \zeta^h q}!}{[k+1]_{\pm \zeta^h q}!} \cdot [k+1]_{(-1)^{n}\zeta^h} \cdot [r]_{\zeta^h q}[r]_{(\zeta^h q)^2}\cdots [r]_{(\zeta^h q)^n}\\
&=\frac{[r]_{\zeta^h q}[2r]_{-\zeta^h q}\cdots [nr]_{(-1)^{n-1}\zeta^h q}}{[1]_{\zeta^h q}[2]_{-\zeta^h q}\cdots [k+1]_{(-1)^{k}\zeta^h q}}\cdot [k+1]_{(-1)^n\zeta^h q}. 
\end{align*}

(iii) If $k$ is even and $n-b\equiv 1  \pmod{2}$ then
\begin{align*}
\sum_{\pi\in H_{r,n}(b-k+1:b)}  \chi_{-1,k}(\pi)q^{\fmaj(\pi)}
&=\dfrac{[n]_{\pm \zeta^h q}!}{[k+1]_{\pm \zeta^h q}!}\, \big(2-[k+1]_{(-1)^n q}\big) \cdot [r]_{\zeta^h q}[r]_{(\zeta^h q)^2}\cdots [r]_{(\zeta^h q)^n}\\
&=\frac{[r]_{\zeta^h q}[2r]_{-\zeta^h q}\cdots [nr]_{(-1)^{n-1}\zeta^h q}}{[1]_{\zeta^h q}[2]_{-\zeta^h q}\cdots [k+1]_{(-1)^{k}\zeta^h q}}\big(2- [k+1]_{(-1)^n\zeta^h q}\big). 
\end{align*}
The results are established.
\qed


\section{A Byproduct}
In this section we study the signed Mahonian
over the subgroup of $G_{r,n}$ given by
\begin{equation} \label{eqn:G*}
G^*_{r,n}:=\{\pi\in G_{r,n}: \col(\pi)\equiv 0\pmod r\},
\end{equation}
which is a special complex reflection group \cite{BC_12}, denoted by $G(r,r,n)$. By (\ref{eqn:one-dimension}) and (\ref{eqn:G*}), it is known that the group $G^*_{r,n}$ has 2 one-dimensional characters, $\epsilon^{\inv(|\pi|)}$ for $\epsilon\in\{1,-1\}$.
For a $\pi=(\pi_1,\dots,\pi_n)\in G^*_{r,n}$, let $\Dmaj(\pi)$ denote the statistic of $\pi$ defined by 
\begin{equation} \label{eqn:Dmaj-G*}
\Dmaj(\pi):=\fmaj((\pi_1,\dots,\pi_{n-1},|\pi_n|)),
\end{equation}
following the notion of $D$-major index for the even signed permutation groups defined by Biagioli and Caselli \cite{BC_04}. For any $w\in S_n$, define
\[
G^*_{r,n}(w):=\{\pi\in G^*_{r,n}: |\pi|=w\}.
\]
We obtain the following results. 

\begin{thm}  \label{thm:Main-I-Dmaj}  For $\epsilon\in\{1,-1\}$, the following results hold.
\begin{enumerate}
\item For any $w\in S_n$, we have
\[
\sum_{\pi\in G^*_{r,n}(w)} \epsilon^{\inv(|\pi|)}q^{\Dmaj(\pi)} = \epsilon^{\inv(w)}q^{\maj(w)}\cdot[r]_{q}[r]_{q^2}\cdots [r]_{q^{n-1}}.
\]
\item We have
\[
\sum_{\pi\in G^*_{r,n}} \epsilon^{\inv(|\pi|)}q^{\Dmaj(\pi)} = [r]_{q}[2r]_{\epsilon q}\cdots [(n-1)r]_{\epsilon^n q}[n]_{\epsilon^{n-1} q}.
\]
\end{enumerate}
\end{thm}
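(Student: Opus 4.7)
The plan is to adapt the hierarchical structure of Section~5 from $\fmaj$ to $\Dmaj$, then reduce Part~(ii) to a Gessel--Simion-type identity by summing over $w\in S_n$. The key observation is that $\Dmaj(\pi)=\fmaj(\pi^*)$, where $\pi^*:=(\pi_1,\dots,\pi_{n-1},|\pi_n|)$ is the colored word obtained from $\pi$ by forcing the last color to be $0$, and that the map $\pi\mapsto\pi^*$ is a bijection from $G^*_{r,n}(w)$ onto $\{\pi^*\in G_{r,n}: |\pi^*|=w,\ z_n^*=0\}$; this is because the constraint $\col(\pi)\equiv 0\pmod r$ selects $r^{n-1}$ elements of $G_{r,n}(w)$, matching the $r^{n-1}$ free choices of $(z_1^*,\dots,z_{n-1}^*)$ on the image side.

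For Part~(i), I would first verify that the minimal element $\tilde{w}$ produced by Algorithm~A always satisfies $\tilde{z}_n=0$, since the rightmost increasing run is indexed by $0$; hence $\tilde{w}$ itself lies in the image of the above bijection. I then introduce, for $0\le k\le n-1$, the nested chain
\[
Y^{(0)}(w)=\{\tilde{w}\}\subset Y^{(1)}(w)\subset\cdots\subset Y^{(n-1)}(w),
\]
where $Y^{(k)}(w):=\{\pi^*:|\pi^*|=w,\ z_j^*=\tilde{z}_j\text{ for }j>k,\ z_n^*=0\}$. The passage from $Y^{(k-1)}(w)$ to $Y^{(k)}(w)$ is controlled by the operator $\varphi(\cdot;k,t)$ of Definition~\ref{def:varphi} for $t\in[0,r-1]$. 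Because $k\le n-1$, the color of the $n$th entry is untouched by $\varphi$, and Lemma~\ref{lem:ab_fmaj}(i) applies verbatim to give $\fmaj(\varphi(\pi^*;k,t))=\fmaj(\pi^*)+kt$. Summing over $t$ geometrically produces the factor $[r]_{q^k}$ at the $k$th stage. Combining with the base case $\fmaj(\tilde{w})=\maj(w)$ from Lemma~\ref{lem:tilde_omega_color}(i), and noting that $\epsilon^{\inv(|\pi|)}=\epsilon^{\inv(w)}$ is constant on $G^*_{r,n}(w)$, induction on $k$ yields
\[
\sum_{\pi\in G^*_{r,n}(w)}\epsilon^{\inv(|\pi|)}q^{\Dmaj(\pi)}=\epsilon^{\inv(w)}q^{\maj(w)}[r]_q[r]_{q^2}\cdots[r]_{q^{n-1}},
\]
establishing Part~(i).

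For Part~(ii), I sum Part~(i) over $w\in S_n$ to obtain
\[
\sum_{\pi\in G^*_{r,n}}\epsilon^{\inv(|\pi|)}q^{\Dmaj(\pi)}=[r]_q[r]_{q^2}\cdots[r]_{q^{n-1}}\cdot\sum_{w\in S_n}\epsilon^{\inv(w)}q^{\maj(w)}.
\]
By (\ref{eqn:Gessel-Simion}) for $\epsilon=-1$ and by the Mahonian $q$-factorial identity for $\epsilon=1$, the second factor equals $\prod_{j=1}^{n}[j]_{\epsilon^{j-1}q}$. I then invoke the classical identity $[jr]_x=[r]_{x^j}[j]_x$ with $x=\epsilon^{j-1}q$. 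Since $\epsilon\in\{1,-1\}$ gives $\epsilon^{j(j-1)}=1$, we get $[r]_{(\epsilon^{j-1}q)^j}=[r]_{q^j}$, so $[r]_{q^j}[j]_{\epsilon^{j-1}q}=[jr]_{\epsilon^{j-1}q}$. Pairing the $j$th factor for $1\le j\le n-1$ yields $[r]_q[2r]_{\epsilon q}\cdots[(n-1)r]_{\epsilon^{n-2}q}$, with $[n]_{\epsilon^{n-1}q}$ left unpaired. Using $\epsilon^n=\epsilon^{n-2}$ recovers the stated form.

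The main obstacle I anticipate is confirming that Lemma~\ref{lem:ab_fmaj}(i) truly transfers to the $\Dmaj$ setting without modification. This reduces to checking that $\varphi(\cdot;k,t)$ with $k\le n-1$ leaves both the entry and the effective color of position~$n$ unchanged, which is guaranteed by $\tilde{z}_n=0$ together with $k\le n-1$. Once this sanity check is in place, the remainder is routine induction and $q$-arithmetic.
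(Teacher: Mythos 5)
Your proof is correct and follows essentially the same route as the paper: the same nested chain driven by $\varphi(\cdot\,;k,t)$ and Lemma~\ref{lem:ab_fmaj}, the base case $\fmaj(\tilde w)=\maj(w)$ from Algorithm A, and summation over $S_n$ via Gessel--Simion for part (ii). Your explicit check that $\tilde z_n=0$ (so that zeroing the last color identifies $G^*_{r,n}(w)$ with $G^{(n-1)}_{r,n}(w)$ and lets Lemma~\ref{lem:ab_fmaj} apply verbatim for $k\le n-1$) is exactly the point the paper's Lemma~\ref{lem:cs_Dmaj} leaves implicit.
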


Note that for $r=2$ in Theorem \ref{thm:Main-I-Dmaj}(ii), we obtain \cite[Theorem 4.8]{Biagioli_06}. 
For any word $\pi=(\sigma,(z_1,\dots,z_n))\in G_{r,n}$, let $\pi^*\in G^*_{r,n}$ denote the word given by
\begin{equation} \label{eqn:r-colored-Dmaj}
\pi^*:=(\sigma, (z_1,\dots,z_{n-1},z^*_n)), 
\end{equation}
where $z^*_n\equiv -(z_1+\cdots+z_{n-1})\pmod r$.
For any $w\in S_n$, let $\tilde{w}=(\tilde{w}_1,\tilde{w}_2,\dots,\tilde{w}_n)\in G_{r,n}(w)$ be the word obtained from $w$ by Algorithm A, and let $\tilde{w}^*$ be determined from $\tilde{w}$ by (\ref{eqn:r-colored-Dmaj}). Note that $\tilde{w}^*$ is the unique member of $G^*_{r,n}(w)$ with the minimum $\Dmaj$ by Lemma~\ref{lem:cs_Dmaj}.

\begin{lem} \label{lem:Dmaj-maj} For any $w\in S_n$, we have  $\Dmaj(\tilde{w}^*)=\maj(w)$.
\end{lem}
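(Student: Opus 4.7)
The plan is to reduce $\Dmaj(\tilde{w}^*)$ back to $\fmaj(\tilde{w})$ and then invoke Lemma \ref{lem:tilde_omega_color}(i), which already gives $\fmaj(\tilde{w}) = \maj(w)$. The whole argument rests on one bookkeeping observation about the output of Algorithm A, so I would organize it in three short steps.

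First, I would verify that the last entry of $\tilde{w}$ carries color $0$. In Algorithm A, the increasing runs of $w$ are indexed $0, 1, \ldots, \des(w)$ from right to left, and the run indexed by $j$ receives color $j \bmod r$. The rightmost increasing run (which contains the position $n$) has index $j = 0$, so it receives color $0$. Therefore $\tilde{w}_n$ is a colored integer with color $0$, which in particular means $|\tilde{w}_n| = \tilde{w}_n$ as elements of the colored alphabet.

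Next, I would unpack the definition of $\tilde{w}^*$. By the construction in \eqref{eqn:r-colored-Dmaj}, $\tilde{w}^*_i = \tilde{w}_i$ for $1 \le i \le n-1$, while $\tilde{w}^*_n$ has the same absolute value as $\tilde{w}_n$ but carries the color $z_n^* \equiv -(z_1+\cdots+z_{n-1}) \pmod r$ that makes $\col(\tilde{w}^*) \equiv 0 \pmod r$. Consequently $|\tilde{w}^*_n| = |\tilde{w}_n| = \tilde{w}_n$ (the last equality using Step 1), and so
\[
(\tilde{w}^*_1, \ldots, \tilde{w}^*_{n-1}, |\tilde{w}^*_n|) = (\tilde{w}_1, \ldots, \tilde{w}_{n-1}, \tilde{w}_n) = \tilde{w}.
\]

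Finally, plugging this identity into the definition \eqref{eqn:Dmaj-G*} of $\Dmaj$ gives $\Dmaj(\tilde{w}^*) = \fmaj(\tilde{w})$, and Lemma \ref{lem:tilde_omega_color}(i) identifies the right-hand side with $\maj(w)$. There is no real obstacle here; the only point one must be careful about is that the indexing of runs in Algorithm A starts from the right, which is precisely what makes the last letter color-free and allows the recoloring performed by $(\cdot)^*$ to leave the $\Dmaj$-truncation unchanged.
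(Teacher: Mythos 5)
Your proof is correct and follows essentially the same route as the paper, which simply cites the definitions of $\Dmaj$ and $(\cdot)^*$ together with Lemma \ref{lem:tilde_omega_color}(i) to conclude $\Dmaj(\tilde{w}^*)=\fmaj(\tilde{w})=\maj(w)$. Your extra step verifying that the last entry of $\tilde{w}$ has color $0$ (so that stripping the color of the $n$th entry leaves $\tilde{w}$ unchanged) is exactly the detail the paper leaves implicit, and it is the right thing to check.
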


\begin{proof}  By (\ref{eqn:Dmaj-G*}), (\ref{eqn:r-colored-Dmaj}) and Lemma \ref{lem:tilde_omega_color}, we have $\Dmaj(\tilde{w}^*)=\fmaj(\tilde{w})=\maj(w)$.
\end{proof}

We associate $\tilde{w}^*$ with a sequence of nested subsets of $G^*_{r,n}(w)$,
\[
G^{*(0)}_{r,n}(w)\subset G^{*(1)}_{r,n}(w)\subset\cdots\subset G^{*(n-1)}_{r,n}(w)=G^*_{r,n}(w),
\]
where $G^{*(0)}_{r,n}(w)=\{\tilde{w}^*\}$ and for $k=1,2,\dots,n-1$,
\[
G^{*(k)}_{r,n}(w)=\{(x_1,\dots,x_k,\tilde{w}_{k+1},\dots,\tilde{w}_{n-1}, w_n^{z^*_n}) : x_j\in\{{w}_j,{w}^1_j,\dots, {w}^{r-1}_j\}, j=1,\dots,k\}.
\] 
By the same argument as in the proof of Lemma \ref{lem:ab_fmaj}, we have the following connection between $G^{*(k-1)}_{r,n}(w)$ and $G^{*(k)}_{r,n}(w)$.

\begin{lem}\label{lem:cs_Dmaj}
For $1\le k\le n-1$, $t\in [0,r-1]$ and any word $\pi\in G^{*(k-1)}_{r,n}(w)$,  we have 
\[
\Dmaj(\varphi(\pi;k,t)^*) = \Dmaj(\pi) + k\cdot t.
\]
\end{lem}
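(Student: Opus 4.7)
The plan is to reduce the claim directly to Lemma \ref{lem:ab_fmaj}(i) by exploiting the fact that $\Dmaj$ strips the color of the last entry. Write $\pi=(\sigma,(z_1,\ldots,z_n))$ and set
\[
\bar{\pi} := (\sigma,(z_1,\ldots,z_{n-1},0)),
\]
the word obtained from $\pi$ by resetting the last color to $0$. By the definition of $\Dmaj$ in (\ref{eqn:Dmaj-G*}), $\Dmaj(\pi)=\fmaj(\bar{\pi})$.

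Since $k\le n-1$, the shifting map $\varphi(\cdot\,;k,t)$ does not touch the $n$th entry, so $\varphi(\pi;k,t)^*$ and $\varphi(\bar{\pi};k,t)$ share the same underlying permutation $\sigma$ and the same first $n-1$ colors $(z_1+t,\ldots,z_k+t,z_{k+1},\ldots,z_{n-1})$; only the $n$th colors differ (one forced by the starring operation, the other fixed at $0$). Since $\Dmaj$ discards precisely that last color, we get
\[
\Dmaj(\varphi(\pi;k,t)^*) \;=\; \fmaj\bigl(\varphi(\bar{\pi};k,t)\bigr).
\]

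The next step is to check that $\bar{\pi}$ lies in the hierarchy set $G_{r,n}^{(k-1)}(w)$ of Section 5, so Lemma \ref{lem:ab_fmaj}(i) applies verbatim. The first $k-1$ entries of $\bar{\pi}$ are arbitrary color-lifts of $w_1,\ldots,w_{k-1}$ (inherited from $\pi\in G^{*(k-1)}_{r,n}(w)$), and $\bar{\pi}_k,\ldots,\bar{\pi}_{n-1}$ coincide with $\tilde{w}_k,\ldots,\tilde{w}_{n-1}$. The only point requiring attention is the alignment $\bar{\pi}_n=\tilde{w}_n$, i.e., $w_n^0=\tilde{w}_n$. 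This holds by Algorithm~A, because the rightmost increasing run of $w$ is indexed by $0$ and hence receives color $0$, forcing $\tilde{w}_n=w_n^0$.

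With $\bar{\pi}\in G_{r,n}^{(k-1)}(w)$ established, Lemma \ref{lem:ab_fmaj}(i) gives $\fmaj(\varphi(\bar{\pi};k,t))=\fmaj(\bar{\pi})+k\cdot t$, and chaining the identities above yields $\Dmaj(\varphi(\pi;k,t)^*)=\Dmaj(\pi)+k\cdot t$, as claimed. The only subtle point in this plan is the alignment $\tilde{w}_n=w_n^0$ handed to us by Algorithm~A; without it one would have to re-run, at the boundary position $k=n-1$, the descent case analysis carried out in the proof of Lemma \ref{lem:ab_fmaj}(i), which is exactly the \emph{same argument} flagged by the authors.
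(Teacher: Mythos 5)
Your proof is correct and follows essentially the same route as the paper's: reduce $\Dmaj$ to $\fmaj$ via the definitions of $\Dmaj$ and the starring operation (\ref{eqn:r-colored-Dmaj}), and then invoke Lemma \ref{lem:ab_fmaj}(i). Your explicit introduction of $\bar{\pi}$ and the verification that $\bar{\pi}\in G^{(k-1)}_{r,n}(w)$ (using $\tilde{w}_n=w_n^0$ from Algorithm~A) makes precise the chain of equalities that the paper states more tersely.
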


\begin{proof} By (\ref{eqn:Dmaj-G*}), (\ref{eqn:r-colored-Dmaj}) and Lemma \ref{lem:ab_fmaj}, we have
\[
\Dmaj(\varphi(\pi;k,t)^*)=\fmaj( \varphi(\pi;k,t) )= \fmaj(\pi)+k\cdot t=\Dmaj(\pi) + k\cdot t.
\]
The result follows.
\end{proof}

\medskip
\noindent
\emph{Proof of Theorem \ref{thm:Main-I-Dmaj}.} (i) By Lemma \ref{lem:Dmaj-maj}, we have
\begin{equation} \label{eqn:signed-D_n^0}
\sum_{\pi\in G^{*(0)}_{r,n}(w)} \epsilon^{\inv(|\pi|)} q^{\Dmaj(\pi)}=\epsilon^{\inv(|\tilde{w}^*|)}q^{\Dmaj(\tilde{w}^*)}=\epsilon^{\inv(w)}q^{\maj(w)}.
\end{equation}
Moreover, by (\ref{eqn:signed-D_n^0}) and Lemma \ref{lem:cs_Dmaj}, we have
\begin{align*}
\sum_{\pi\in G^{*(k)}_{r,n}(w)} \epsilon^{\inv(|\pi|)}q^{\Dmaj(\pi)}
&= \sum_{\pi\in G^{*(k-1)}_{r,n}(w)} \left( \sum_{j=0}^{r-1} \epsilon^{\inv(|\varphi(\pi;k,j)^*|)}q^{\Dmaj(\varphi(\pi;k,j)^*)} \right) \\
&= \big(1+q^k+\cdots+q^{(r-1)k}\big)\sum_{\pi\in G^{*(k-1)}_{r,n}(w)} \epsilon^{\inv(|\pi|)}q^{\Dmaj(\pi)} \\
&=\left(\prod_{i=1}^{k} \big(1+q^i+\cdots+q^{(r-1)i}\big) \right)\sum_{\pi\in G^{*(0)}_{r,n}(w)} \epsilon^{\inv(|\pi|)}q^{\Dmaj(\pi)} \\
&=\epsilon^{\inv(w)}q^{\maj(w)}\cdot\prod_{i=1}^{k} \big(1+q^i+\cdots+q^{(r-1)i}\big).
\end{align*}
By induction, the assertion (i) of Theorem \ref{thm:Main-I-Dmaj} follows.

(ii) By (\ref{eqn:Gessel-Simion}) and the assertion (i), we have
\begin{align*}
\sum_{\pi\in G^*_{r,n}}\epsilon^{\inv(|\pi|)}q^{\Dmaj(\pi)} &= \sum_{w\in S_n} \left(\sum_{\pi\in G^*_{r,n}(w)}\epsilon^{\inv(|\pi|)}q^{\Dmaj(\pi)}\right) \\
&= \left(\prod_{i=1}^{n-1} \big( 1+ q^i+\cdots+ q^{(r-1)i}\big)\right)  \sum_{w\in S_n} \epsilon^{\inv(w)} q^{\maj(w)} \\
&= \left(\prod_{i=1}^{n-1} \big( 1+ q^i+\cdots+ q^{(r-1)i}\big)\right)  [1]_{q} [2]_{\epsilon q} \cdots  [n]_{\epsilon^{n-1} q} \\
&= [r]_{q} [2r]_{\epsilon q}\cdots [(n-1)r]_{\epsilon^{n-2} q}[n]_{\epsilon^{n-1} q}.
\end{align*}
The proof of Theorem \ref{thm:Main-I-Dmaj} is completed.
\qed

\section{Two Remarks}
Biagioli and Caselli derived the result (\ref{eqn:Biagioli-Caselli}) in the context of projective reflection group (cf. \cite[Theorem~4.1]{BC_12}.
One of the key ingredients of Biagioli and Caselli's method is to prove the following formula \cite[Theorem 4.4]{BC_12}
\begin{equation} \label{eqn:ingredients}
\sum_{\pi\in G_{r,n}} (-1)^{\inv(|\pi|)} q^{\fmaj(\pi)}=[r]_q[2r]_{-q}\cdots [nr]_{(-1)^{n-1}q},
\end{equation}
using the decomposition
$G_{r,n}=U_n S_n$,
where $U_n:=\{\pi\in G_{r,n}: \pi_1<\cdots <\pi_n\}$, i.e., every $\pi\in G_{r,n}$ has a unique factorization $\pi=\tau\sigma$ with $\tau\in U_n$ and $\sigma\in S_n$. This approach is also used in \cite{ABR,AGR,BZ}. See  \cite[Proposition 4.1]{BZ} for a proof. Using  (\ref{eqn:Gessel-Simion}),  the following intermediate stage is deduced
\begin{equation} \label{eqn:parabolic_decomposition}
\sum_{\pi\in G_{r,n}} (-1)^{\inv(|\pi|)} q^{\fmaj(\pi)}=\sum_{\tau\in U_n} (-1)^{\inv(|\tau|)} q^{\col(\tau)} [1]_{q^r}[2]_{-q^r}\cdots [n]_{(-1)^{n-1}q^r}.
\end{equation}
We present an alternative proof of (\ref{eqn:Biagioli-Caselli}), using (\ref{eqn:Gessel-Simion}) and Theorem \ref{thm:Main-I}, as follows.

\begin{align*}
\sum_{\pi\in G_{r,n}}\chi_{\epsilon,h}(\pi)q^{\fmaj(\pi)} &= \sum_{w\in S_n} \left(\sum_{\pi\in G_{r,n}(w)}\chi_{\epsilon,h}(\pi)q^{\fmaj(\pi)}\right) \\
&= [r]_{\zeta^h q}[r]_{(\zeta^h q)^2}\cdots [r]_{(\zeta^h q)^n}  \sum_{w\in S_n} \epsilon^{\inv(w)} (\zeta^h q)^{\maj(w)} \\
&= [r]_{\zeta^h q}[r]_{(\zeta^h q)^2}\cdots [r]_{(\zeta^h q)^n}\cdot  [1]_{\zeta^h q} [2]_{\epsilon\zeta^h q} \cdots  [n]_{\epsilon^{n-1}\zeta^h q} \\
&= [r]_{\zeta^h q} [2r]_{\epsilon\zeta^h q}\cdots [nr]_{\epsilon^{n-1}\zeta^h q}.
\end{align*}
The proof of (\ref{eqn:Biagioli-Caselli}) is completed.

In this paper, we study the signed Mahonian on the quotients of the parabolic subgroup $G_{r,k}$ of $G_{r,n}$. We observe that two systems $T$ and $T'$ of coset representatives of $G_{r,k}$ may share the same signed Mahonian polynomials. We write $T\sim T'$ if the following relation between $T$ and $T'$ holds. 
\begin{align*}
\sum_{\pi\in T}\chi_{\epsilon,h}(\pi)q^{\fmaj(\pi)} = \sum_{\pi\in T'}\chi_{\epsilon,h}(\pi)q^{\fmaj(\pi)}.
\end{align*} 
Theorem \ref{thm:Main-result-coset} indicates that $G_{r,n}(b-k-1:b-2)\sim G_{r,n}(b-k+1:b)$ for each one-dimensional character, and Theorem \ref{thm:Main-II} implies that for $\epsilon=-1$, $H_{r,n}(b-k:b-1)\sim H_{r,n}(b-k+1:b)$ if $k$ is odd and $H_{r,n}(b-k-1:b-2)\sim H_{r,n}(b-k+1:b)$ if $k$ is even.

For any word $\alpha=(\alpha_1,\ldots,\alpha_k)$ on the set $[n]$, let $S_n(\alpha)\subset S_n$ be the subset of permutations containing $\alpha$ as a subsequence, and let
\[
H_{r,n}(\alpha):=\bigcup_{w\in S_n(\alpha)} G_{r,n}(w).
\] 

For $\epsilon=1$, using Theorem 5.1, we have
\begin{align*}
\sum_{\pi\in H_{r,n}(\alpha)}\chi_{1,h}(\pi)q^{\fmaj(\pi)} &= \sum_{w\in S_n(\alpha)} \left( \sum_{\pi\in G_{r,n}(w)}\chi_{1,h}(\pi)q^{\fmaj(\pi)} \right) \\
&= [r]_{\zeta^hq}[r]_{(\zeta^hq)^2}\cdots[r]_{(\zeta^hq)^n} \sum_{w\in S_n(\alpha)}\left(\zeta^hq\right)^{\maj(w)} \\
&= [r]_{\zeta^hq}[r]_{(\zeta^hq)^2}\cdots[r]_{(\zeta^hq)^n}\cdot \left(\zeta^hq\right)^{\maj(\alpha)}[k+1]_{\zeta^hq}[k+2]_{\zeta^hq}\cdots[n]_{\zeta^hq} \\
&= \left(\zeta^hq\right)^{\maj(\alpha)} \cdot \frac{[r]_{\zeta^hq}[2r]_{\zeta^hq}\cdots[nr]_{\zeta^hq}}{[1]_{\zeta^hq}[2]_{\zeta^hq}\cdots[k]_{\zeta^hq}}.
\end{align*}
Hence for any word $\alpha=(\alpha_1,\ldots,\alpha_k)$ on the set $[n-1]$, we actually have $H_{r,n}(\alpha)\sim H_{r,n}(\alpha+1)$ for $\epsilon=1$. We are interested in a bijective proof of the above relations.

\section*{Acknowledgements.}
The authors thank the referees for reading the
manuscript carefully and providing helpful suggestions that improve the presentation of the paper. 
The authors were supported in part by
Ministry of Science and Technology (MOST) grant 107-2115-M-003-009-MY3 (S.-P. Eu), 109-2115-M-153-004-MY2 (T.-S. Fu), and 108-2115-M-153-004-MY2 (Y.-H. Lo).

\end{document}